\documentclass[11pt,reqno,oneside]{amsart}

\usepackage{amsmath, amsthm, amssymb}
\usepackage[applemac]{inputenc}

\usepackage{tensor}
\usepackage{bbm}%lowercase mathbb
\usepackage{geometry}
\usepackage{soul}%underline 
\usepackage[pdftex]{graphicx}
\usepackage{float}
\usepackage{subfig}
\usepackage[english]{babel}
\usepackage{tikz} % graphical language
\usepackage{setspace}
\usetikzlibrary{arrows,decorations.pathreplacing,decorations.markings,shapes.geometric}
\tikzset{
    bt/.style={draw=blue,thick},
    ns/.style={circle,draw=blue,fill=blue, inner sep=0pt, minimum size=2mm},
    string/.style={draw=#1, postaction={decorate}, decoration={markings,mark=at position .45 with {\arrow[blue]{triangle 60}}}},
    doublestring/.style={draw=#1, postaction={decorate}, decoration={markings, mark=at position .7 with {\arrow[blue]{triangle 60}}, 
    mark=at position .3 with {\arrowreversed[blue]{triangle 60}}}},
    costring/.style={draw=#1, postaction={decorate}, decoration={markings,mark=at position .55 with {\arrow[draw=#1]{<}}}},
    arr/.style={string=blue, thick},
    doublearr/.style={doublestring=blue, thick},
    lin/.style={blue},
    dlin/.style = {blue, dashed, thick},
    dot/.style={circle,draw=#1,fill=#1,inner sep=1pt},
}

\usepackage{dsfont}
\usepackage{color}
\usepackage[color,matrix,arrow]{xy}%diagrams
\graphicspath{{graphics/}}
\numberwithin{equation}{section}
\usepackage{fancyhdr}
\theoremstyle{definition}

\newtheorem{Prop}{Proposition}[section]
\newtheorem{Lemma}{Lemma}[section]
\newtheorem{Rem}{Remark}[section]

\newtheorem{Not}{Notation}[section]
\newtheorem{Conv}{Convention}[section]

\newcommand{\sh}[1]{\mathcal{#1}}

\DeclareMathOperator{\SL}{SL}

\DeclareMathOperator{\C}{\mathbb{C}}

\DeclareMathOperator{\Z}{\mathbb{Z}}

\DeclareMathOperator{\length}{\ell}

\DeclareMathOperator{\val}{\nu}

\DeclareMathOperator{\Gm}{\mathbb{G}_m}

\DeclareMathOperator{\Aff}{\mathbb{A}}

\DeclareMathOperator{\rot}{rot}

\DeclareMathOperator{\open}{o}
\DeclareMathOperator{\hyp}{h}
\usepackage[pdftex,
bookmarks=true,
bookmarksnumbered=true,
hypertexnames=false,
colorlinks = true,%colorlinks = true if you want color
linkcolor = black,
citecolor = black,
urlcolor = black]{hyperref}

\title{Orbits of subgroups of codimension one to four of the Iwahori group
in the affine flag variety of $\SL_2$}

\begin{document}

\author{Claude Eicher}
\address{SWITZERLAND}
\email{claudeeicher@gmail.com}
\date{\today}
\setcounter{tocdepth}{3}
\maketitle

\tableofcontents

\setstretch{1.4}

\begin{abstract}
We describe how each finite dimensional Schubert cell in the affine flag variety of $\SL_2$
decomposes into orbits for a chain of subgroups of codimension one to four of the Iwahori group. 
\end{abstract}

\section{Introduction}
In this article we describe how each finite dimensional Schubert cell in the affine flag variety of $\SL_2$ over $\C$
decomposes into orbits for the subgroups of the standard Iwahori group $I$ in the chain 

\begin{align}\label{eq:chainofsubgroupsofI}
I \supset I_{\{\alpha_1\}}
\supset I_{\{\alpha_1,\alpha_0\}} \supset I_{\{\alpha_1,\alpha_0,\delta+\alpha_1\}} \supset I_{\{\alpha_1,\alpha_0,\delta,\delta+\alpha_1\}}\;.
\end{align}

(Recall that in the affine flag variety the finite dimensional Schubert cells
are precisely the $I$-orbits.) Here $I_S \subset I$ is the subgroup obtained by
removing from $I$ the root subgroup associated to each element of $S$, a finite set of positive affine
roots of the corresponding affine Kac-Moody Lie algebra $\widehat{\mathfrak{sl}_2}$. As usual the simple roots are denoted by $\alpha_1$ and $\alpha_0$
and $\delta = \alpha_1+\alpha_0$. 
In the case of the last subgroup $I_{\{\alpha_1,\alpha_0,\delta,\delta+\alpha_1\}}$ we include the multiplicative group of loop rotations in
order for each Schubert cell to decompose into finitely many orbits. 
We find that for each containment in \eqref{eq:chainofsubgroupsofI} an orbit $O$
is either again an orbit for the subgroup or decomposes as $O = O^{\hyp} \sqcup O^{\open}$, where $O^{\hyp}$ is the locus of a hyperplane in natural coordinates of $O$ and $O^{\open}$ is its open complement. The natural 
coordinates are restrictions of the natural coordinates describing the Schubert cell as an affine space. 
If we label the orbits in this way, each $I_S$-orbit is named by the Schubert cell in which it is contained. 

\par
In the subsequent text we abbreviate the subgroups under consideration by $I_1 = I_{\{\alpha_1\}}$, 
$I_2 = I_{\{\alpha_1,\alpha_0\}}$, $I_3 = I_{\{\alpha_1,\alpha_0,\delta+\alpha_1\}}$, and $I_4 = I_{\{\alpha_1,\alpha_0,\delta,\delta+\alpha_1\}}$,
the index being the codimension in $I$. 

\subsection{Main result}
Our main result is Proposition \autoref{Prop:I4rotorbits}. 

\subsection{Related work}
 In the case of $\SL_2$ the corresponding $I_1$-orbits
 in the Kashiwara flag scheme and their closure relations are described without proof in \cite{Eic16b}[section 1.2].
The group $I_1$ has an obvious generalization to the subgroup $I \cap\; ^{s_i}I$ of the Iwahori group $I$ of the affine Kac-Moody group considered in \cite{Eic16b}, here $s_i$ is a simple reflection and $^{s_i}I$ is the $s_i$-conjugate of $I$. The subvariety $X_w \cap s_i X_w \cong \Gm \times \Aff^{\length(w)-1}$, 
where $X_w$ is a finite dimensional Schubert cell in the Kashiwara flag scheme and we require $s_iw < w$
in the Bruhat order for the element $w$ of the Weyl group, is an $I \cap\; ^{s_i}I$-orbit. In \cite{Eic16b} we investigate twisted $\sh{D}$-modules constructed from such a subvariety.  \par

In \cite{Eic20} we consider a subvariety of the affine flag variety of $\SL_2$
which is the $I_4^{\rot}$-orbit $E_1^{\open\open} \cong \Gm^2$ of the present work. 
We there investigate a construction
of twisted $\sh{D}$-modules based on a local system with two complex monodromy parameters
supported on $E_1^{\open\open}$. 
In the present work we exhibit $E_1^{\open\open}$ as the lowest dimensional member
of a collection of $I_4^{\rot}$-orbits isomorphic to $\Gm^2 \times \Aff^d$, $d \in \Z_{\geq 0}$. 
For all these orbits, a construction as in \cite{Eic20} would be possible. 

\subsection{Notation}
For an element $p \in \C((t))$ we denote by $p_n$ the coefficient of $t^n$. 
We use the degree $\nu_{\infty}: \C[t,t^{-1}]\setminus\{0\} \rightarrow \Z$
and the valuation $\nu: \C((t))\setminus\{0\} \rightarrow \Z$ defined
by $\nu_{\infty}(p) = N_1$ and $\nu(p)=N_0$ for $p = \sum_{n=N_0}^{N_1} p_n t^n$,
where $N_0 \leq N_1$, $p_{N_0} \neq 0$ and $p_{N_1} \neq 0$. 
If $p \in \C((t)) \setminus\{0\}$ we write $p = t^n p_{(0)}$ for a 
unique $n=\nu(p) \in \Z$ and a unique $p_{(0)} \in \C[[t]]^{\times}$, where $\C[[t]]^{\times}$
denotes the invertible elements of $\C[[t]]$. 
We always work with schemes and ind-varieties over $\C$ and only consider their $\C$-points. 

\section{Affine flag variety of $\SL_2$}
We consider the loop group $\SL_2((t))$ of $\SL_2$ over $\C$, given by

\begin{align*}
\SL_2((t)) = \left\{\begin{pmatrix} a & b \\ c & d\end{pmatrix}\; \vert\; a,b,c,d \in \C((t)),\; ad-bc = 1  \right\}\;.
\end{align*}

We consider its standard Iwahori subgroup $I$, a group scheme, of $\SL_2((t))$ given by

\begin{align*}
I = \left\{\begin{pmatrix} a & b \\ tc & d\end{pmatrix}\; \vert\; a,b,c,d \in \C[[t]],\; ad-t bc = 1  \right\}\;.
\end{align*}

Here, the condition $ad-tbc = 1$ obviously implies $a_0d_0=1$ and hence $a,d \in \C[[t]]^{\times}$. 
The quotient $\SL_2((t))/I$ is the affine flag variety of $\SL_2$, an ind-variety \cite{BD}[section 7.15.1]. 
The following lemma is well-known. 

\begin{Lemma} \label{Lemma:normalform}
For each $g \in \SL_2((t))$ there is a unique $n \in \Z$ and a unique $p \in \C[t,t^{-1}]$ such that either
\begin{align*}
g I = \begin{pmatrix} t^n & p \\0 & t^{-n}\end{pmatrix}I\quad \text{or}\quad g I = \begin{pmatrix} p & t^n \\ -t^{-n} & 0\end{pmatrix}I
\end{align*}
with $p=0$ or $\val_{\infty}(p) < n$ in the first case and $p=0$ or $\val_{\infty}(p) \leq n$
in the second case.
\end{Lemma}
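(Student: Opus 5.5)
Right multiplication by $I$ acts on the columns of $g=\begin{pmatrix} a & b\\ c& d\end{pmatrix}$. The plan is to first bring $g$ into the stated normal form (existence), and then show that two normal forms in the same coset must coincide (uniqueness). I will use the explicit right $I$-action: multiplying by $\begin{pmatrix}1 & x\\ 0&1\end{pmatrix}$ with $x\in\C[[t]]$ adds $x$ times the first column to the second. Multiplying by $\begin{pmatrix}1&0\\ ty&1\end{pmatrix}$ with $y\in\C[[t]]$ adds $ty$ times the second column to the first. Multiplying by $\operatorname{diag}(u,u^{-1})$ with $u\in\C[[t]]^\times$ rescales the columns.

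\textbf{Existence.} I will split into two cases according to the valuations in the bottom row.

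First suppose $c=0$ or $\val(c)>\val(d)$, with $d\neq0$. Then $c/(td)\in\C[[t]]$, so I can clear $c$ using the lower unipotent, giving $c=0$. Now $ad=1$, so $d=t^{-n}d_{(0)}$ and $a=t^n d_{(0)}^{-1}$. Rescaling by the torus with $u=d_{(0)}$ gives $\begin{pmatrix}t^n&b\\0&t^{-n}\end{pmatrix}$. Adding $x\cdot$(first column) with $x\in\C[[t]]$ changes $b$ by $t^n x$. This lets me kill every coefficient $b_m$ with $m\ge n$, leaving a Laurent polynomial $p$ with $\val_\infty(p)<n$, or $p=0$.

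Otherwise $c\neq 0$ and $\val(c)\le\val(d)$, so $d/c\in\C[[t]]$. Subtracting $(d/c)\cdot$(first column) from the second column makes $d=0$. Then $-bc=1$, so $c=-t^{-n}c_{(0)}$ and $b=t^n c_{(0)}^{-1}$. The torus normalizes these entries to $c=-t^{-n}$ and $b=t^n$. Adding $ty\cdot$(second column) to the first changes $a$ by $t^{n+1}y$. This kills all coefficients $a_m$ with $m\ge n+1$, leaving $p$ with $\val_\infty(p)\le n$.

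\textbf{Uniqueness.} For two representatives $g,g'$ of either shape with $g^{-1}g'\in I$, I will compute $g^{-1}g'$ explicitly using $g^{-1}=\begin{pmatrix}d&-b\\-c&a\end{pmatrix}$ and impose the $I$-conditions: the diagonal entries are in $\C[[t]]$, the upper right entry is in $\C[[t]]$, and the lower left entry is in $t\C[[t]]$.

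\emph{Both of the first type.} We get $g^{-1}g'=\begin{pmatrix}t^{n'-n}&*\\0&t^{n-n'}\end{pmatrix}$. Integrality forces $n=n'$. The upper right entry is then $t^{-n}(p'-p)$, which lies in $\C[[t]]$ only if $p'-p$ has no terms of degree $<n$. Since both have degree $<n$, this gives $p=p'$.

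\emph{Both of the second type.} The product is
\[
\begin{pmatrix}0&-t^{n}\\ t^{-n}&p\end{pmatrix}^{-1}\text{-type computation giving } \begin{pmatrix} t^{n-n'}&0\\ t^{-n}(p'-p)\cdot t^{?}&t^{n'-n}\end{pmatrix}.
\]
Concretely, the diagonal entries are $t^{n-n'}$ and $t^{n'-n}$, which again forces $n=n'$. The lower left entry is then $t^{-n}(p-p')$ up to sign, and it must lie in $t\C[[t]]$. Since $\val_\infty(p-p')\le n$, this forces $p=p'$.

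\emph{Mixed types.} With $g$ of the first type and $g'$ of the second, the lower left entry of $g^{-1}g'$ is $-t^{n}t^{-n'}\cdot$(unit-free)$= -t^{n-n'}$. Writing it out: $g^{-1}g'$ has lower left entry $t^{n}\cdot(-t^{-n'})=-t^{n-n'}$ and upper right entry $t^{-n}t^{n'}=t^{n'-n}$. Both must be integral and the lower left must be divisible by $t$. This requires $n-n'\ge1$ and $n'-n\ge0$, a contradiction.

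The main obstacle is the bookkeeping of the inverse products in the uniqueness step, and especially the boundary exponent conditions: strict $<n$ versus $\le n$, which come from the $t$ in the lower left entry of $I$. Existence is routine column reduction.
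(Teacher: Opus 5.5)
Your proof is correct and follows essentially the paper's argument. Existence uses the same valuation case split, and your column operations compose to the paper's explicit elements of $I$, followed by the same truncation with unipotents. Uniqueness checks integrality of $g^{-1}g'$, which amounts to the paper's computation of the element of $I$ relating the two representatives, with the same contradiction in the mixed case.

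The one blemish is the garbled display in the case where both representatives are of the second type. Replace it by the actual product $g^{-1}g'=\begin{pmatrix} t^{n-n'} & 0\\ t^{-n}p'-t^{-n'}p & t^{n'-n}\end{pmatrix}$, from which your stated conclusion follows; you in fact write out this case, which the paper only calls similar.
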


\begin{Not}
We abbreviate this $I$-coset by $[n,p]$ and $[n,p]^{\prime}$ respectively.  
\end{Not}

The proof expresses $n$ and $p$ explicitly in terms of $g$.  
In the following we often apply the argument of the proof of this lemma to bring
an element of $\SL_2((t))/I$ to the form $[n,p]$ or $[n,p]^{\prime}$.

\begin{proof}

 \emph{Existence}. Let $g = \begin{pmatrix} a & b\\ c & d\end{pmatrix} \in \SL_2((t))$. 
 Then either $\frac{c}{d} \in t\C[[t]]$ or $\frac{d}{c} \in \C[[t]]$. If $d=0$ or $c=0$ this is clear,
hence assume $c \neq 0$ and $d \neq 0$. Then we write 
 $c=t^{\nu(c)} c_{(0)}$ and $d=t^{\nu(d)} d_{(0)}$. Then $\frac{c}{d} \in t^{\nu(c)-\nu(d)}\C[[t]]^{\times}$
 and hence we are in the first case if and only if $\nu(c)-\nu(d) \geq 1$. Since
 $\frac{d}{c} \in t^{\nu(d)-\nu(c)} \C[[t]]^{\times}$ we are in the second case if and only if $\nu(d)-\nu(c) \geq 0$. 
 \newline
\emph{Case $\frac{c}{d} \in t\C[[t]]$.} Then $d \neq 0$ and we have

\begin{align*}
g I &= g \begin{pmatrix} d_{(0)} & 0 \\ -t^{-\nu(d)}c & d_{(0)}^{-1}\end{pmatrix}I = \begin{pmatrix} t^{-\nu(d)} & b d_{(0)}^{-1} \\ 0 & t^{\nu(d)}\end{pmatrix}I = \begin{pmatrix} t^{-\nu(d)} & b d_{(0)}^{-1} \\ 0 & t^{\nu(d)}\end{pmatrix}\begin{pmatrix} 1 & s \\ 0 & 1\end{pmatrix}I \\
& = \begin{pmatrix} t^{-\nu(d)} & b d_{(0)}^{-1}+t^{-\nu(d)}s \\ 0 & t^{\nu(d)}\end{pmatrix}I 
\end{align*}

for $s \in \C[[t]]$ arbitrary, thus $gI$ is as in the first case indicated.\newline
\emph{Case $\frac{d}{c} \in \C[[t]]$.} Then $c \neq 0$ and we have

\begin{align*}
g I &= g \begin{pmatrix} -c_{(0)}^{-1} & t^{-\nu(c)}d \\ 0 & -c_{(0)}\end{pmatrix}I = \begin{pmatrix} -\frac{a}{c_{(0)}} & t^{-\nu(c)} \\ -t^{\nu(c)} & 0 \end{pmatrix}I = \begin{pmatrix} -\frac{a}{c_{(0)}} & t^{-\nu(c)} \\ -t^{\nu(c)} & 0 \end{pmatrix}\begin{pmatrix} 1 & 0 \\ t s & 1\end{pmatrix}I \\ 
& =\begin{pmatrix}-\frac{a}{c_{(0)}}+t^{-\nu(c)+1}s & t^{-\nu(c)} \\-t^{\nu(c)} & 0\end{pmatrix}I
\end{align*}
with $s \in \C[[t]]$ arbitrary, thus $gI$ is as in the second case indicated.\par

\emph{Uniqueness}. 
Consider the case when $g_1I=g_2I$ are in the first case. Thus we assume for some $g=\begin{pmatrix} a & b \\ tc & d\end{pmatrix} \in I$, $p, q \in \C[t,t^{-1}]$ with $\nu_{\infty}(p)
< n$, $\nu_{\infty}(q) < m$ that

\begin{align*}
\begin{pmatrix} t^n & p \\ 0 & t^{-n}\end{pmatrix} g= \begin{pmatrix} t^n a+tp c & t^n b+pd \\ t^{-n+1}c & t^{-n}d\end{pmatrix}= \begin{pmatrix} t^m & q \\ 0 & t^{-m}\end{pmatrix}\;.
\end{align*}

Thus $c = 0$, $a \in \C[[t]]^{\times}$ implies $a=1$ and $m=n$. Thus $d=1$. $t^n b+p = q$ then implies
$b=0$ and $p=q$. We have shown $g_1=g_2$. 
Next, consider the case when $g_1I=g_2 I$, $g_1 I$ in the second and $g_2 I$ in the first case. Thus we assume for some
$g = \begin{pmatrix} a & b \\ ct & d\end{pmatrix} \in I$, $p, q \in \C[t,t^{-1}]$ with $\val_{\infty}(p) \leq n$
and $\val_{\infty}(q) < m$
\begin{align*}
\begin{pmatrix} p & t^n \\-t^{-n} & 0\end{pmatrix}g = \begin{pmatrix} pa+ct^{1+n} & pb+t^n d \\-t^{-n} a & -t^{-n} b\end{pmatrix}= \begin{pmatrix} t^m & q \\0 & t^{-m}\end{pmatrix}\;.
\end{align*}
It follows $a=0$ and $c=t^{m-n-1}$ and $b=-t^{n-m}$. This contradicts $b,c \in \C[[t]]$. 
The final case when both $g_1I$ and $g_2I$ are in the second case is again similar. 
\end{proof}

\begin{Conv}\label{Conv:truncationinprimeandprimeprime}
Let $p \in \C((t))$. $[n,p]$ is understood to mean $[n, p^{(n)}]$, where
$p^{(n)} = \sum_{k < n} p_k t^k$. This is justified by the fact
that $\begin{pmatrix} t^n & p \\ 0 & t^{-n}\end{pmatrix}I = \begin{pmatrix} t^n & p^{(n)} \\ 0 & t^{-n}\end{pmatrix}I$ as used in the proof of the lemma.
Similarly, $[n,p]^{\prime}$
is understood to mean $[n, p^{(n+1)}]^{\prime}$. 
\end{Conv}

\begin{Rem}\label{Rem:Gmrotactionon[]primeand[]primeprime}
The $\Gm$-action on $\SL_2((t))$ given by loop rotation, $t \mapsto \gamma t$, $\gamma \in \C^{\times}$, 
is denoted by $\Gm^{\rot}$. It induces the same-named action on $\SL_2((t))/I$. It is given by $\gamma \cdot [n, p(t)] = [n, \gamma^n p(\gamma t)]$ and $\gamma \cdot [n,p(t)]^{\prime} = [n, \gamma^n p(\gamma t)]^{\prime}$, where $\gamma \in \C^{\times}$ and $p(t) \in \C((t))$. 
\end{Rem}

\begin{Rem}\label{Rem:involutiondots}
The automorphism of $\SL_2((t))/I$ given by translation by $\dot{s_1} = \begin{pmatrix} 0 & -1\\ 1 & 0\end{pmatrix}$ is given by $\dot{s_1}[n,0] = [-n,0]^{\prime}$ and $\dot{s_1} [n, p] = [-\nu(p), -\frac{t^{-n}}{p_{(0)}}]$
if $p \neq 0$ and 
and $\dot{s_1}[n,0]^{\prime} = [-n,0]$ and $\dot{s_1}[n, p]^{\prime} = [-\nu(p), -\frac{t^{-n}}{p_{(0)}}]^{\prime}$
if $p \neq 0$.  
Here we apply Convention \autoref{Conv:truncationinprimeandprimeprime}. 
This automorphism is an involution. 
\end{Rem}

\section{$I$-Orbits}

For $n \in \Z$ we define the $I$-orbit $E_n := I [n,0]$ and $O_n := I [n,0]^{\prime}$
in $\SL_2((t))/I$. We have chosen the notation $E$ and $O$
to indicate that these
orbits are even and odd dimensional respectively, as is apparent from Proposition \autoref{Prop:decompofFl} below.

\begin{Lemma}
Every point of $\SL_2((t))/I$ lies in $E_n$ or $O_n$ for some $n \in \Z$. 
\end{Lemma}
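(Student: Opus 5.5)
By Lemma \autoref{Lemma:normalform}, every point of $\SL_2((t))/I$ is either $[n,p]$ or $[n,p]^{\prime}$ for a unique $n \in \Z$ and $p \in \C[t,t^{-1}]$. It is therefore enough to show that $[n,p] \in E_n$ and $[n,p]^{\prime} \in O_n$. For each, I will write down an explicit element of $I$ carrying the base point $[n,0]$, respectively $[n,0]^{\prime}$, to the given coset.

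\emph{First case.} Let $[n,p]$ be given with $p=0$ or $\val_{\infty}(p)<n$. Since $p$ is a Laurent polynomial, $u := t^{-n}p \in \C[t,t^{-1}]$ is defined. Consider the upper triangular unipotent matrix $\begin{pmatrix} 1 & x \\ 0 & 1\end{pmatrix}$. It lies in $I$ exactly when $x \in \C[[t]]$, and
\begin{align*}
\begin{pmatrix} 1 & x \\ 0 & 1\end{pmatrix}\begin{pmatrix} t^n & 0 \\ 0 & t^{-n}\end{pmatrix} = \begin{pmatrix} t^n & x t^{-n} \\ 0 & t^{-n}\end{pmatrix}\;.
\end{align*}
So I need $x t^{-n} = p$ up to the ambiguity allowed by Convention \autoref{Conv:truncationinprimeandprimeprime}, i.e.\ $x = t^{n}p$. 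This is the step to check carefully. The condition is $\nu(t^np) = n + \nu(p) \geq 0$, but the normal form only bounds $p$ from above ($\val_{\infty}(p)<n$). For $n\geq 0$ this could fail, so the direct choice will not work in general and the bound must instead be used through the right $I$-action.

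The correct way to see the orbit is via the Bruhat decomposition. The cosets with fixed $n$ in the first case form the set $\{[n,p]\}$ with $p \in \bigoplus_{k<n}\C t^k$. For this to be a finite-dimensional cell, the effective range must be $\nu(p)\geq -n$ or similar. Since $x\in\C[[t]]$ is arbitrary, $I$ acting on the left produces $p^{(n)}$ with $p = xt^{-n}$, $x \in \C[[t]]$. This gives exactly the $p$ with $-n \leq k < n$. For $n \le 0$ this range is empty, and I would additionally use the lower unipotent $\begin{pmatrix}1&0\\ty&1\end{pmatrix}$ with $y \in \C[[t]]$. Multiplying it onto $[n,0]$ and renormalizing as in the existence proof of Lemma \autoref{Lemma:normalform} produces cosets $[n',p']$ with other $n'$. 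Thus the real content is showing that every $[n,p]$ is reached from some base point. So I will define $m$ from $p$: take $m = n$ if $p=0$ or $\nu(p)\ge -n$. Otherwise $\nu(p)<-n$, and I apply the inverse of the renormalization to show $[n,p]$ is $I$-conjugate to a base point of the other type, using the involution in Remark \autoref{Rem:involutiondots} as a guide.

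\emph{Second case.} I will treat $[n,p]^{\prime}$ symmetrically, using $\begin{pmatrix}1&x\\0&1\end{pmatrix}$, which sends $[n,0]^{\prime}$ to $[n, xt^{-n}]^{\prime}$ and so realizes all $p$ with $-n \le k \le n$.

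\emph{Main obstacle.} The hardest step is the valuation bookkeeping that determines which base point $[m,0]$ or $[m,0]^{\prime}$ a coset with small $\nu(p)$ belongs to. It relies on the explicit formulas from the existence proof of Lemma \autoref{Lemma:normalform} combined with lower unipotent elements of $I$. I would first try $y = -t^{-1}/p$-type corrections and check integrality case by case.
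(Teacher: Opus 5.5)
\textbf{There is a genuine gap: the opening reduction is false, and the nontrivial range $\nu(p)+n<0$ is never proved.}

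You reduce to showing $[n,p]\in E_n$ and $[n,p]^{\prime}\in O_n$, but this is false in general. For example, $[0,t^{-1}]$ is a valid normal form ($\nu_\infty(t^{-1})=-1<0$). Yet $E_0=\{[0,0]\}$ is a single point, so $[0,t^{-1}]\notin E_0$; it actually lies in $O_{-1}$.

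What you do establish, using $\begin{pmatrix}1&t^np\\0&1\end{pmatrix}\in I$, is only the range $\nu(p)+n\geq 0$. This is the easy half, the paper's cases 1 and 3. For $\nu(p)+n<0$, which is the real content of the lemma, you give no argument:
\begin{itemize}
\item you do not say which base point the coset is $I$-equivalent to;
\item you exhibit no element of $I$;
\item ``apply the inverse of the renormalization'' and ``$y=-t^{-1}/p$-type corrections'' are not a proof. With lower-left entry $ty=-1/p$, the $(2,2)$-entry of the product becomes $t^{-n}-1$, which does not vanish unless $n=0$.
\end{itemize}
The primed case with $\nu(p)<-n$ is not addressed at all. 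Citing the affine Bruhat decomposition would settle the lemma outright, but you only invoke it as a heuristic.

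\textbf{The missing step.} Write $m=\nu(p)$ and $p=t^mp_{(0)}$. If $m+n<0$, then $t^{-m-n}\in t\C[[t]]$, so the left factor below lies in $I$:
\begin{align*}
\begin{pmatrix}p_{(0)}^{-1}&0\\-t^{-m-n}&p_{(0)}\end{pmatrix}\begin{pmatrix}t^n&p\\0&t^{-n}\end{pmatrix}=\begin{pmatrix}p_{(0)}^{-1}t^n&t^m\\-t^{-m}&0\end{pmatrix}.
\end{align*}
Since $m\leq\nu_\infty(p)<n$, right multiplication by $\begin{pmatrix}1&0\\-p_{(0)}^{-1}t^{n-m}&1\end{pmatrix}\in I$ turns this into $\begin{pmatrix}0&t^m\\-t^{-m}&0\end{pmatrix}$. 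Hence $[n,p]\in O_{\nu(p)}$.

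Similarly, for $[n,p]^{\prime}$ with $m+n<0$, left multiplication by $\begin{pmatrix}p_{(0)}^{-1}&0\\t^{-m-n}&p_{(0)}\end{pmatrix}\in I$ gives $\begin{pmatrix}t^m&p_{(0)}^{-1}t^n\\0&t^{-m}\end{pmatrix}$. Since $n\geq m$, right multiplication by $\begin{pmatrix}1&-p_{(0)}^{-1}t^{n-m}\\0&1\end{pmatrix}\in I$ yields $\begin{pmatrix}t^m&0\\0&t^{-m}\end{pmatrix}$. Hence $[n,p]^{\prime}\in E_{\nu(p)}$.

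These are the paper's cases 2 and 4. The ideas you were missing are two. First, the target base point is indexed by $\nu(p)$, not by $n$, and the two cases swap parity. Second, the integrality condition that puts the lower-left entry in $t\C[[t]]$ is exactly $\nu(p)+n<0$. Equivalently, a lower unipotent works if its lower-left entry is $-t^{-n}/p$ rather than $-1/p$.
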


\begin{proof}
By Lemma \autoref{Lemma:normalform}, omitting the obvious case $p=0$, any point of $\SL_2((t))/I$
is either of the form $[n, p]$ for some $n \in \Z$
and $\nu_{\infty}(p) < n$ or $[n,p]^{\prime}$ for some $n \in \Z$
and $\nu_{\infty}(p) \leq n$.  
Writing $p=t^m p_{(0)}$ with $m=\nu(p) \in \Z$ and $p_{(0)} \in \C[[t]]^{\times} \cap \C[t]$ we
have $m < n$ and $0 \leq \nu_{\infty}(p_{(0)}) < n-m$ in the first case and $m \leq n$ and
$0 \leq \nu_{\infty}(p_{(0)}) \leq n-m$ in the second case. 
\newline

\emph{1. $[n,p] \in E_n$ for $m+n \geq 0$.}
We indeed have

\begin{align*}
\begin{pmatrix} 1 & -t^{m+n}p_{(0)} \\ 0 & 1\end{pmatrix} \begin{pmatrix} t^n & p \\ 0 & t^{-n}\end{pmatrix} = \begin{pmatrix} t^n & 0 \\ 0 & t^{-n}\end{pmatrix}\;.
\end{align*}
\newline

\emph{2. $[n,p] \in O_m$ for $m+n < 0$.} We indeed have
\begin{align*}
\begin{pmatrix} p_{(0)}^{-1} & 0 \\ -t^{-m-n} & p_{(0)}\end{pmatrix} \begin{pmatrix} t^n & p \\ 0 & t^{-n}\end{pmatrix}
\begin{pmatrix} 1 & 0 \\ -p_{(0)}^{-1}t^{n-m} & 1\end{pmatrix} = \begin{pmatrix} 0 & t^m \\ -t^{-m} & 0\end{pmatrix}\;.
\end{align*}
\newline

\emph{3. $[n, p]^{\prime} \in O_n$ for $m+n \geq 0$.} We indeed have

\begin{align*}
\begin{pmatrix}1 & p_{(0)} t^{m+n} \\ 0 & 1\end{pmatrix}\begin{pmatrix} p & t^n \\-t^{-n} & 0\end{pmatrix}
= \begin{pmatrix} 0 &  t^n \\-t^{-n} & 0\end{pmatrix}\;. 
\end{align*}
\newline

\emph{4. $[n, p]^{\prime} \in E_m$ for $m+n < 0$.} We indeed have

\begin{align*}
\begin{pmatrix}p_{(0)}^{-1} & 0 \\ t^{-m-n} & p_{(0)}\end{pmatrix} \begin{pmatrix} p & t^n \\ -t^{-n} & 0\end{pmatrix} 
\begin{pmatrix} 1 & -p_{(0)}^{-1}t^{n-m} \\ 0 & 1\end{pmatrix} = \begin{pmatrix} t^m & 0 \\0 & t^{-m}\end{pmatrix}\;. 
\end{align*}

\end{proof}

The following description of the $I$-orbits is fundamental for this article. 
\begin{Prop}\label{Prop:decompofFl} \noindent

\begin{enumerate}
\item For $n \geq 0$ we have
\begin{align}\label{eq:Lnpos}
E_n = [n, \C t^{-n}+\C t^{-n+1}+\dots + \C t^{n-1}]\;.
\end{align}
The subgroup of $I$ of elements
$\begin{pmatrix} 1 & p \\ 0 & 1\end{pmatrix}$
with $p \in \C[t]$ such that $\val_{\infty}(p) \leq 2n-1$ acts freely and transitively on $E_n$, so $E_n \cong \Aff^{2n}$. Any element $\begin{pmatrix} a & b \\ tc & d\end{pmatrix} \in I$ with $\nu(b) \geq 2n$ fixes
$[n,0]$.

\item For $n < 0$ we have 
\begin{align}
\begin{split}\label{eq:Lnneg}
E_n &= [-n-1, \C^{\times} t^n + \C t^{n+1}+ \dots + \C t^{-n-1}]^{\prime}\\
& \sqcup [-n-2, \C^{\times} t^n + \C t^{n+1}+\dots + \C t^{-n-2}]^{\prime}\sqcup \dots \sqcup [n, \C^{\times} t^n]^{\prime}\sqcup [n,0]\;.
\end{split}
\end{align} 
The subgroup of $I$ of elements $\begin{pmatrix}
1 & 0 \\ t p & 1\end{pmatrix}$ with $p \in \C[t]$  such that $\val_{\infty}(p) \leq -2n-1$
acts freely and transitively on $E_n$, so $E_n \cong \Aff^{-2n}$.   For $0 \leq k \leq -2n-1$ the map $p \mapsto 
\begin{pmatrix} 1 & 0 \\t p & 1 \end{pmatrix}[n,0]$ sends the $p$ with $\val_{\infty}(p) \leq -2n-1$ and
$\val(p) = k$ bijectively to $[-n-1-k, \C^{\times} t^n + \C t^{n+1}+\dots + \C t^{-n-1-k}]
^{\prime}$. Any element $\begin{pmatrix} a & b \\ tc & d\end{pmatrix} \in I$ with $\nu(c) \geq -2n$ fixes
$[n,0]$.

\item For $n \geq 0$ we have
\begin{align*}
O_n = [n, \C t^{-n}+\C t^{-n+1}+\dots + \C t^n]^{\prime}\;.
\end{align*}
The subgroup of $I$ of elements
$\begin{pmatrix} 1 & p \\¬†0 & 1\end{pmatrix}$
with $p \in \C[t]$ such that $\val_{\infty}(p) \leq 2n$ acts freely and transitively on $O_n$, so $O_n \cong \Aff^{2n+1}$. Any element $\begin{pmatrix} a & b \\ tc & d\end{pmatrix} \in I$ with $\nu(b) \geq 2n+1$ fixes
$[n,0]^{\prime}$. 

\item For $n < 0$ we have 
\begin{align*}
O_n &= [-n-1, \C^{\times} t^n + \C t^{n+1}+ \dots + \C t^{-n-2}]\\
& \sqcup [-n-2, \C^{\times} t^n + \C t^{n+1}+\dots + \C t^{-n-3}]\sqcup \dots \sqcup [n+1, \C^{\times} t^n]\sqcup [n,0]^{\prime}\;.
\end{align*} 
The subgroup of $I$ of elements $\begin{pmatrix}
1 & 0 \\ t p & 1\end{pmatrix}$ with $p \in \C[t]$  such that $\val_{\infty}(p) \leq -2n-2$
acts freely and transitively on $O_n$, so $O_n \cong \Aff^{-2n-1}$. For $0 \leq k \leq -2n-2$ the map $p \mapsto 
\begin{pmatrix} 1 & 0 \\t p & 1 \end{pmatrix}[n,0]^{\prime}$ sends the $p$ with $\val_{\infty}(p) \leq -2n-2$ and
$\val(p) = k$ bijectively to $[-n-1-k, \C^{\times} t^n + \C t^{n+1}+\dots + \C t^{-n-2-k}]$.
Any element $\begin{pmatrix} a & b \\ tc & d\end{pmatrix} \in I$ with $\nu(c) \geq -2n-1$ fixes
$[n,0]^{\prime}$. 
\end{enumerate}      

\end{Prop}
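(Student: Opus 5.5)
The plan is to treat the four parts by one mechanism: identify the stabilizer of the base point, find a unipotent subgroup complementary to it, and compute that subgroup's action through the normal form of Lemma \autoref{Lemma:normalform}. Any element of $I$ factors as
\begin{align*}
\begin{pmatrix} a & b \\ tc & d\end{pmatrix} = \begin{pmatrix} 1 & b d^{-1} \\ 0 & 1\end{pmatrix}\begin{pmatrix} d^{-1} & 0 \\ tc & d\end{pmatrix},
\end{align*}
and also with the lower triangular unipotent factor on the left and an upper triangular factor on the right. Since $[n,0]$ and $[n,0]^{\prime}$ are given by monomial matrices, conjugating $I$ by them shows which elements fix the base point.

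For $[n,0]$, conjugation gives
\begin{align*}
\begin{pmatrix} t^{-n} & 0 \\ 0 & t^{n}\end{pmatrix}\begin{pmatrix} a & b \\ tc & d\end{pmatrix}\begin{pmatrix} t^{n} & 0 \\ 0 & t^{-n}\end{pmatrix} = \begin{pmatrix} a & t^{-2n}b \\ t^{1+2n}c & d\end{pmatrix}.
\end{align*}
This lies in $I$ exactly when $\nu(t^{-2n}b)\geq 0$ and $\nu(t^{2n}c)\geq 0$. Hence for $n\geq 0$ every element with $\nu(b)\geq 2n$ fixes $[n,0]$, and for $n<0$ every element with $\nu(c)\geq -2n$ does. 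The analogous conjugation by $\begin{pmatrix} 0 & t^n\\ -t^{-n} & 0\end{pmatrix}$ swaps the roles of the entries and gives the conditions $\nu(b)\geq 2n+1$, respectively $\nu(c)\geq -2n-1$, stated in (3) and (4).

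To get transitivity, write a general $g\in I$ as $u\cdot s$, where $u$ is unipotent with polynomial entry truncated below the stated degree and $s$ satisfies the stabilizer condition. For $n\geq 0$ this uses the upper unipotent factor: set $b'=bd^{-1}$, split off its part of valuation $\geq 2n$ (respectively $\geq 2n+1$), and note that the remaining factor still fixes the base point by the stabilizer computation. For $n<0$ one instead factors
\begin{align*}
g=\begin{pmatrix} 1 & 0\\ tp & 1\end{pmatrix}\begin{pmatrix} a & b\\ 0 & *\end{pmatrix}\cdot(\ldots)
\end{align*}
and truncates $c a^{-1}$ similarly.

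Freeness follows because the truncated unipotent group meets the stabilizer trivially. Concretely:
\begin{itemize}
\item In the upper case, $\begin{pmatrix} 1 & p\\ 0 & 1\end{pmatrix}[n,0]=[n,pt^{-n}]$, and $pt^{-n}$ ranges exactly over $\C t^{-n}+\dots+\C t^{n-1}$. Lemma \autoref{Lemma:normalform} (uniqueness) then gives (1) together with the bijection with $\Aff^{2n}$.
\item Similarly, $\begin{pmatrix} 1 & p\\ 0 & 1\end{pmatrix}[n,0]^{\prime}=[n,pt^{-n}]^{\prime}$ after a sign adjustment, using the form $\begin{pmatrix} p t^{-n} & t^n\\ -t^{-n} & 0\end{pmatrix}$. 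This gives (3).
\end{itemize}

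The main work is the case $n<0$, because the lower unipotent action leaves the normal form. I would compute
\begin{align*}
\begin{pmatrix} 1 & 0\\ tp & 1\end{pmatrix}\begin{pmatrix} t^n & 0\\ 0 & t^{-n}\end{pmatrix}=\begin{pmatrix} t^n & 0\\ t^{n+1}p & t^{-n}\end{pmatrix}
\end{align*}
and apply the existence half of the proof of Lemma \autoref{Lemma:normalform}. With $k=\nu(p)$, we have $\nu(c)-\nu(d)=2n+1+k$, which is $\leq 0$ as long as $k\leq -2n-1$. So we are in the second case, with $\nu(c)=n+1+k$. The formula there yields
\begin{align*}
\left[-n-1-k,\; -\tfrac{t^{n}}{p_{(0)}}\right]^{\prime},
\end{align*}
truncated by Convention \autoref{Conv:truncationinprimeandprimeprime} at degree $-n-1-k$. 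Its lowest coefficient is nonzero in degree $n$, and it is polynomial in degrees $n,\dots,-n-1-k$.

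It remains to check that $p_{(0)}\mapsto$ the truncation of $-1/p_{(0)}$ is a bijection from units whose polynomial part has degree $\leq -2n-1-k$ onto $\C^{\times}+\C t+\dots+\C t^{-2n-1-k}$. Inversion modulo $t^{-2n-k}$ is bijective on truncated units, which gives the stratification in (2). Part (4) is the same computation with $[n,0]^{\prime}$, landing in the first case of the lemma; alternatively it can be deduced from (2) via the involution $\dot{s_1}$ of Remark \autoref{Rem:involutiondots}. The bookkeeping of truncation degrees in these $n<0$ cases is where errors are most likely.
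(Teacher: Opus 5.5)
Your argument is correct, but it is organized differently from the paper's. The paper never determines stabilizers or factors elements of $I$. Instead it runs the normal-form algorithm of Lemma \ref{Lemma:normalform} on $g[n,0]$ for an arbitrary $g=\begin{pmatrix} a & b\\ tc & d\end{pmatrix}\in I$. This single computation gives both the inclusion of the orbit in the stated set and the fixing criterion. In (2) it splits into two cases: $\nu(c)\le -2n-1$, which lands in $[-n-1-\nu(c),\dots]^{\prime}$, and $\nu(c)\ge -2n$, which returns $[n,0]$. The unipotent subgroup is used only for the reverse inclusion, and injectivity comes from solving for the coefficients of $c$ one at a time.

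You instead compute $\mathrm{Stab}_I([n,0])=I\cap wIw^{-1}$ exactly by conjugation. You then use the truncated factorization $g=u_p\cdot s$ to get $I=U\cdot\mathrm{Stab}$, so only the action of $U$ needs computing, and you phrase the bijection on each stratum as inversion in $(\C[t]/t^{-2n-k})^{\times}$. Your route gives a bit more: exact stabilizers, and freeness cleanly from $U\cap\mathrm{Stab}=1$, since stabilizers along a transitive action are conjugate. It is also the standard structural argument. The paper's route instead produces an explicit formula for $g[n,0]$ with general $g$, which is the template it reuses for $I_1,\dots,I_4$ in later sections.

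Two small points. First, the $n<0$ factorization you display is garbled. The clean version is $g=\begin{pmatrix}1&0\\ tca^{-1}&1\end{pmatrix}\begin{pmatrix} a & b\\ 0 & a^{-1}\end{pmatrix}$. After that you split $ca^{-1}$ into a polynomial of degree $\le -2n-1$ (resp.\ $\le -2n-2$ for $O_n$) plus a remainder whose factor lies in the stabilizer.

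Second, your suggested alternative of deducing (4) from (2) via $\dot{s_1}$ does not work. $\dot{s_1}$ does not normalize $I$, and since $\dot{s_1}[n,0]=[-n,0]^{\prime}$ it carries the $n<0$ regime into the $n>0$ one. Indeed $\dot{s_1}E_n=O_{-n}^{\hyp}$ for $n<0$, which is not an $I$-orbit. Your direct computation for (4) is the right argument and checks out: it lands in the first case of the lemma, giving $[-n-1-k,\,t^n/p_{(0)}]$ truncated to degrees $<-n-1-k$.
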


\begin{proof}\noindent

(1) For $\begin{pmatrix} a & b \\ tc & d\end{pmatrix} \in I$ we have

\begin{align*}
& \begin{pmatrix} a & b \\ tc & d\end{pmatrix} \begin{pmatrix} t^n & 0 \\ 0 & t^{-n}\end{pmatrix}I
= \begin{pmatrix} t^n a & t^{-n}b \\ t^{n+1}c & t^{-n}d\end{pmatrix}I = \begin{pmatrix} t^n & \frac{t^{-n}b}{d}\\ 0 & t^{-n}\end{pmatrix}I 
\end{align*}

since 

\begin{align*}
\frac{\widetilde{c}}{\widetilde{d}} = \frac{t^{2n+1}c}{d} \in t\C[[t]]
\end{align*}

and $\widetilde{d}=t^{-n}\widetilde{d}_{(0)}=t^{-n}d$. 

\begin{Not}
Here and in the following we often denote by 
$\widetilde{c}$ and $\widetilde{d}$ the entries of the bottom row of the element of $\SL_2((t))$ to which
we apply Lemma \autoref{Lemma:normalform}. 
\end{Not}

This shows $\subseteq$ in \eqref{eq:Lnpos} and that $[n,0]$ 
is fixed when $\nu(b) \geq 2n$. On the other hand 
we have

\begin{align*}
E_n = I[n,0] \supseteq \begin{pmatrix} 1 & \C+\C t+\dots + \C t^{2n-1} \\ 0 & 1\end{pmatrix} [n,0] = [n, \C t^{-n}+\dots + \C t^{n-1}]
\end{align*}

and this shows $\supseteq$ in \eqref{eq:Lnpos}. Also

\begin{align*}
\begin{pmatrix} 1 & p\\ 0 & 1\end{pmatrix} [n,0]= [n, t^{-n}p]
\end{align*}

shows that the indicated subgroup indeed acts freely and transitively on $E_n$. 
\par

(2) For $\begin{pmatrix} a & b \\ tc & d\end{pmatrix} \in I$ we have

\begin{align}\label{eq:eltofIon[n,0]prime}
\begin{pmatrix} a & b \\ tc & d\end{pmatrix} \begin{pmatrix} t^n & 0 \\ 0 & t^{-n}\end{pmatrix}I
= \begin{pmatrix} t^n a & t^{-n}b \\ t^{n+1}c & t^{-n}d\end{pmatrix}I\;.
\end{align}

We have

\begin{align*}
\frac{\widetilde{d}}{\widetilde{c}} = \frac{t^{-2n-1}d}{c} \in \C[[t]]
\end{align*}

if $-2n-1-\nu(c) \geq 0$ and then \eqref{eq:eltofIon[n,0]prime} equals

\begin{align*}
\begin{pmatrix} -\frac{t^n a}{c_{(0)}} & t^{-n-1-\nu(c)} \\ -t^{n+1+\nu(c)} & 0\end{pmatrix} I
\in [-n-1-\nu(c), \C^{\times} t^n+\C t^{n+1}+\dots + \C t^{-n-1-\nu(c)}]^{\prime}\;.
\end{align*}

As a special case we find for $\nu_{\infty}(c) \leq -2n-1$ that

\begin{align*}
& \begin{pmatrix} 1 & 0 \\ tc & 1\end{pmatrix} \begin{pmatrix} t^n & 0 \\ 0 & t^{-n}\end{pmatrix}I = 
\begin{pmatrix} -\frac{t^n}{c_{(0)}} & t^{-n-1-\nu(c)} \\ -t^{n+1+\nu(c)} & 0\end{pmatrix} I \\
& = \begin{pmatrix} p_n t^n+p_{n+1}t^{n+1}+\dots + p_{-n-1-\nu(c)} t^{-n-1-\nu(c)} & t^{-n-1-\nu(c)} \\ -t^{n+1+\nu(c)} & 0\end{pmatrix}I\;.
\end{align*}

Here $p_n \in \C^{\times}$ uniquely determines $c_{\nu(c)} \in \C^{\times}$, $p_{n+1} \in \C$ then uniquely determines $c_{\nu(c)+1} \in \C$,
and so on, until $p_{-n-1-\nu(c)} \in \C$ uniquely determines $c_{-2n-1} \in \C$. Thus $p$ uniquely determines $c$
and hence the statement of bijectivity and that the action is free and transitive follows. We have shown that for $0 \leq k \leq -2n-1$ 

\begin{align*}
E_n \supseteq [-n-1-k, \C^{\times} t^n+\C t^{n+1}+\dots+\C t^{-n-1-k}]^{\prime}
\end{align*}

and hence $\supseteq$ in \eqref{eq:Lnneg}. 
Assume now $\nu(c) \geq -2n$. Then \eqref{eq:eltofIon[n,0]prime}

equals

\begin{align*}
\begin{pmatrix} t^n & \frac{t^{-n}b}{d} \\ 0 & t^{-n}\end{pmatrix}I = \begin{pmatrix} t^n & 0 \\ 0 & t^{-n}\end{pmatrix}I
\end{align*}

since

\begin{align*}
\frac{\widetilde{c}}{\widetilde{d}} = \frac{t^{2n+1}c}{d} \in t\C[[t]]
\end{align*}

and $\frac{t^{-n}b}{d} \in t^{-n}\C[[t]] \subseteq t^n \C[[t]]$. Thus we have also shown $\subseteq$ in \eqref{eq:Lnneg}
and the statement that $[n,0]$ is fixed when $\nu(c) \geq -2n$.\par

(3) is analogous to (1). \par
(4) is analogous to (2). 
\end{proof}

\section{$I_1$-Orbits}
We have

\begin{align*}
I_1 &= \left\{\begin{pmatrix} a & tb \\ tc & d\end{pmatrix}\vert\;  a,b,c,d \in \C[[t]],\;  ad-t^2bc = 1\right\} \\
I_2 &= \left\{\begin{pmatrix} a & tb \\ t^2c & d\end{pmatrix}\; \vert\; a,b,c,d \in \C[[t]],\; ad-t^3bc = 1  \right\} \\
I_3 &= \left\{\begin{pmatrix} a & t^2b \\ t^2c & d\end{pmatrix}\; \vert\; a,b,c,d \in \C[[t]],\; ad-t^4bc = 1  \right\} \\
 I_4 &= \\
& \left\{\begin{pmatrix} \alpha(1+t^2a) & t^2b \\ t^2c & \alpha^{-1}(1+t^2d)\end{pmatrix}\;\vert\; \alpha \in \C^{\times}, a, b, c, d \in \C[[t]],\; a+d+t^2(ad-bc)=0  \right\}
\end{align*}

We use the parametrization of elements of $I_4$ given here in the computations below. 

\begin{Conv}
In the propositions below describing the decomposition of an orbit into orbits of a subgroup, we 
omit the trivial case of point orbits.
\end{Conv}

Each $I$-orbit is either an $I_1$-orbit or decomposes into two $I_1$-orbits as follows.  

\begin{Prop}\label{Prop:I1orbits} \noindent
\begin{enumerate}
\item $E_n$, $n < 0$, is
an $I_1$-orbit.
\item 
$O_n$, $n <0$, is an $I_1$-orbit.  

\item $E_n$, $n > 0$, decomposes into
the $I_1$-orbits
\begin{align*}
& E_n^{\open} := [n, \C^{\times}¬†t^{-n}+\C t^{-n+1}+\dots + \C t^{n-1}
] \cong \Gm \times \Aff^{2n-1}\\
& E_n^{\hyp} := [n, \C t^{-n+1}+\C t^{-n+2}+\dots + \C t^{n-1}]
\cong \Aff^{2n-1}\;.
\end{align*}

In fact, we have $I_3 [n,t^{-n}] \supseteq E_n^{\open}$ and $I_2 [n,0] \supseteq E_n^{\hyp}$. 

\item $O_n$, $n \geq 0$, decomposes into the
$I_1$-orbits
\begin{align*}
& O_n^{\open} := [n,\C^{\times} t^{-n}+\C t^{-n+1}+\dots + \C t^n]^{\prime}
\cong \Gm \times \Aff^{2n}\\
& O_n^{\hyp} := [n, \C t^{-n+1}+ \C t^{-n+2}+\dots + \C t^n]^{\prime}
\cong \Aff^{2n}\;. 
\end{align*}

In fact, we have $I_3[n,t^{-n}]^{\prime} \supseteq O_n^{\open}$ and $I_2[n,0]^{\prime} \supseteq O_n^{\hyp}$. 

\end{enumerate}
\end{Prop}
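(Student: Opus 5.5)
For parts (1) and (2) the plan is to reuse Proposition \autoref{Prop:decompofFl}(2),(4), where $E_n$ (resp.\ $O_n$) for $n<0$ is already shown to be a single orbit of the unipotent subgroup of elements $\begin{pmatrix} 1 & 0 \\ tp & 1\end{pmatrix}$ with $\val_{\infty}(p)\leq -2n-1$ (resp.\ $\leq -2n-2$). This subgroup lies in $I_1$, since $I_1$ only removes the root subgroup of $\alpha_1$, i.e.\ it requires the upper right entry to lie in $t\C[[t]]$ and leaves the lower left entry unconstrained. Hence $I_1$ acts transitively on $E_n$ and on $O_n$. Since $I_1\subset I$, each $I$-orbit is $I_1$-stable, so these sets are exactly $I_1$-orbits.

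For (3), with $n>0$, I would apply the computation in the proof of Proposition \autoref{Prop:decompofFl}(1) to a general $g\in I_1$. The upper right entry of $g$ is $tb$, so
\[
g[n,0]=\Bigl[n,\tfrac{t^{-n+1}b}{d}\Bigr].
\]
The coefficient of $t^{-n}$ is therefore $0$, which shows $I_1[n,0]\subseteq E_n^{\hyp}$. Acting with $\begin{pmatrix}1 & tp\\ 0 & 1\end{pmatrix}$ for $p\in\C[t]$ with $\val_\infty(p)\leq 2n-2$ gives every point of $E_n^{\hyp}$. These elements lie in $I_2\subset I_1$, because their lower left entry is $0$, which proves $I_2[n,0]\supseteq E_n^{\hyp}$.

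For $E_n^{\open}$, a general point $g[n,t^{-n}]$ with $g\in I_1$ is computed as before: the $(1,2)$ entry becomes $t^{-n}(a+tb)$ up to normalization. The key point is that the coefficient of $t^{-n}$ equals $a_0/d_0=a_0^2\neq 0$, using $a_0d_0=1$. So $I_1$ preserves $E_n^{\open}$, and $E_n=E_n^{\open}\sqcup E_n^{\hyp}$ is $I_1$-stable with each piece an orbit.

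For transitivity on $E_n^{\open}$ inside $I_3$, I would use two kinds of elements:
\begin{itemize}
\item the torus $\mathrm{diag}(\alpha,\alpha^{-1})$, which scales the $t^{-n}$ coefficient by $\alpha^2$ and so reaches every value in $\C^\times$;
\item the elements $\begin{pmatrix}1&t^2q\\0&1\end{pmatrix}$, which add $t^{-n+2}q$.
\end{itemize}
The obstacle is the coefficient of $t^{-n+1}$, which $I_3$ cannot reach with upper triangular elements, since $I_3$ requires the upper right entry in $t^2\C[[t]]$. The first thing I would try is a diagonal element $\mathrm{diag}(1+\beta t,(1+\beta t)^{-1})\in I_3$. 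It sends $t^{-n}$ to $t^{-n}(1+\beta t)^2$ and so changes the $t^{-n+1}$ coefficient by $2\beta$ times the leading coefficient. Composing this with the torus and then the unipotent elements reaches any point of $E_n^{\open}$.

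The isomorphisms $E_n^{\open}\cong\Gm\times\Aff^{2n-1}$ and $E_n^{\hyp}\cong\Aff^{2n-1}$ follow from the coordinates in \eqref{eq:Lnpos}. Part (4) is identical, with $[n,p]^{\prime}$ and Proposition \autoref{Prop:decompofFl}(3) in place of (1). The main check there is the normalization formula for $g[n,p]^{\prime}$ via the second case of Lemma \autoref{Lemma:normalform}, confirming that the $t^{-n}$ coefficient again transforms by multiplication by a unit.
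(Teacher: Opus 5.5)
Your proposal is correct and follows essentially the same route as the paper. Parts (1)--(2) come from the unipotent subgroups of Proposition \ref{Prop:decompofFl} lying in $I_1$. For $E_n^{\open}$, the computation of $g[n,t^{-n}]$ shows the leading coefficient $a_0/d_0=a_0^2\neq 0$, and transitivity inside $I_3$ comes from $\mathrm{diag}(\alpha(1+\beta t),\alpha^{-1}(1+\beta t)^{-1})$ together with $\begin{pmatrix}1&t^2q\\0&1\end{pmatrix}$. The only cosmetic difference is that you check $I_1[n,0]\subseteq E_n^{\hyp}$ directly via $g[n,0]=[n,t^{-n+1}b/d]$, whereas the paper gets $I_1$-invariance of $E_n^{\hyp}$ as the complement of the $I_1$-invariant $E_n^{\open}$ in $E_n$.
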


\begin{Rem}
By the proposition, the list of $I_1$-orbits in $\SL_2((t))/I$ with their distinguished points is

\begin{align*}
&E_n \ni [n,0],\ n \leq 0\\
& O_n \ni [n,0]^{\prime},\ n < 0\\
& E_n^{\open} \ni [n,t^{-n}],\ n > 0 \\
& E_n^{\hyp} \ni [n,0],\ n > 0\\
& O_n^{\open} \ni [n,t^{-n}]^{\prime},\ n \geq 0\\
& O_n^{\hyp} \ni [n,0]^{\prime},\ n \geq 0\;.
\end{align*}

The point orbits are $E_0=[0,0]$, $O^{\hyp}_0=[0,0]^{\prime}$. 
\end{Rem}

Here and below we call a choice of point that has ``simple'' coordinates 
in an orbit for the given group a distinguished point of the orbit. If this point lies in the orbit
of the subgroup, we also require it to be the distinguished point of that orbit.
We often act by a group element on the distinguished point in order to analyze the
group action in the orbit. 

\begin{Rem}
Since $\dot{s_1} I_1 \dot{s_1}^{-1} = I_1$ it follows that if 
$O$ is an $I_1$-orbit in $\SL_2((t))/I$, then so is $\dot{s_1}O$. From Remark \autoref{Rem:involutiondots} we find that the involution $\dot{s_1}$ of 
$\SL_2((t))/I$ restricts to isomorphisms

\begin{align*}
& E_n \xrightarrow{\cong} O_{-n}^{\hyp},\; n \leq 0 \\
& O_n \xrightarrow{\cong} E_{-n}^{\hyp},\; n < 0\\
& E_n^{\open} \xrightarrow{\cong} E_n^{\open},\; n  > 0 \\
& O_n^{\open} \xrightarrow{\cong} O_n^{\open},\; n \geq 0\;.
\end{align*}

\end{Rem}

\begin{proof}\noindent

(1) is a direct consequence of Proposition \autoref{Prop:decompofFl}(2) since $\begin{pmatrix} 1 & 0 \\ tp & 1\end{pmatrix} \in I_1$ for $p \in \C[t]$, $\nu_{\infty}(p) \leq 2n-1$.  \par

(2) is a direct consequence of Proposition \autoref{Prop:decompofFl}(4) since $\begin{pmatrix} 1 & 0 \\ tp & 1\end{pmatrix} \in I_1$ for $p \in \C[t]$, $\nu_{\infty}(p) \leq 2n-2$.\par

(3) By \eqref{eq:Lnpos} we have $E_n = E_n^{\open}\sqcup E_n^{\hyp}$. \par
\emph{$E_n^{\open}$ is an $I_1$-orbit and $I_3[n,t^{-n}]\supseteq E_n^{\open}$.} First, we show $I_1 [n, t^{-n}] \subseteq E_n^{\open}$. We have
for $\begin{pmatrix} a & tb \\ tc & d\end{pmatrix} \in I_1$

\begin{align}\label{eq:generaleltofI1on[n,t^(-n)]^prime}
\begin{pmatrix} a & tb \\ tc & d\end{pmatrix}\begin{pmatrix} t^n & t^{-n} \\ 0 & t^{-n}\end{pmatrix}I
= \begin{pmatrix} t^n a & t^{-n}a+t^{-n+1}b \\ t^{n+1}c & t^{-n+1}c+t^{-n}d\end{pmatrix}I\;.
\end{align}

Since 

\begin{align*}
\frac{\widetilde{c}}{\widetilde{d}} = \frac{t^{2n+1}c}{tc+d} \in t\C[[t]]
\end{align*}

and $\widetilde{d}=t^{-n}\widetilde{d}_{(0)} = t^{-n}(d+tc)$ \eqref{eq:generaleltofI1on[n,t^(-n)]^prime} equals

\begin{align*}
\begin{pmatrix} t^n & \frac{t^{-n}a+t^{-n+1}b}{d+tc} \\ 0 & t^{-n}\end{pmatrix}I \in E_n^{\open}\;.
\end{align*}

Second, we show $I_3 [n, t^{-n}] \supseteq E_n^{\open}$.
We have $\begin{pmatrix} \alpha(1+at) & 0 \\ 0 & \alpha^{-1}(1+at)^{-1}\end{pmatrix} \in I_3$  for $\alpha \in \C^{\times}$, $a \in \C$ 

\begin{align*}
\begin{pmatrix} \alpha(1+at) & 0 \\ 0 & \alpha^{-1}(1+at)^{-1}\end{pmatrix} \begin{pmatrix} t^n & t^{-n} \\ 0 & t^{-n}\end{pmatrix} I = \begin{pmatrix} t^n & \alpha^2 t^{-n}(1+2at+a^2t^2) \\ 0 & t^{-n}\end{pmatrix}I
\end{align*}

and hence the coefficient of $t^{-n}$ and $t^{-n+1}$ can be made arbitrary elements
of $\C^{\times}$ and $\C$. Acting
with $\begin{pmatrix} 1 & t^2b \\ 0 & 1\end{pmatrix} \in I_3$ for $b \in \C[[t]]$ the coefficients of $t^{-n+2}$, 
$\dots$, $t^{n-1}$ can also be made arbitrary. 

\emph{$E_n^{\hyp}$ is an $I_1$-orbit and $I_2[n,0]\supseteq E_n^{\hyp}$.} First, since $E_n$ 
and $E_n^{\open}$ are $I_1$-invariant, so is $E_n^{\hyp}$.  Second, we have $I_2[n,0]\supseteq E_n^{\hyp}$.
Indeed

\begin{align*}
I_2 [n,0] \supseteq \begin{pmatrix} 1 & \C t+\C t^2+\dots +\C t^{2n-1} \\ 0 & 1\end{pmatrix} [n,0]= E_n^{\hyp}\;.
\end{align*}

\par

(4) is analogous to (3). 
\end{proof}

\section{$I_2$-Orbits}
Each $I_1$-orbit, recall that some of them are in fact $I$-orbits, 
is either an $I_2$-orbit or decomposes into two $I_2$-orbits as follows. 
\begin{Prop}\label{Prop:I2orbits}
\noindent
\begin{enumerate}
\item $E_n^{\open}$, $E_n^{\hyp}$, $n > 0$, is an $I_2$-orbit. 

\item $O_n^{\open}$, $n \geq 0$, $O_n^{\hyp}$, $n > 0$,  is an $I_2$-orbit.  

\item $E_n$, $n < 0$, decomposes into the $I_2$-orbits

\begin{align*}
E_n^{\open} &:= [-n-1, \C^{\times}t^n+\C t^{n+1}+\dots +\C t^{-n-1}]^{\prime} \cong \Gm \times \Aff^{-2n-1}\\
E_n^{\hyp} &:= [-n-2, \C^{\times} t^n + \C t^{n+1}+\dots + \C t^{-n-2}]^{\prime}\sqcup \dots \sqcup [n, \C^{\times} t^n]^{\prime}\sqcup [n,0]\cong \Aff^{-2n-1}\;.
\end{align*}

In fact, we have $I_4[-n-1,t^n]^{\prime} \supseteq E_n^{\open}$
and $I_4[n,0] \supseteq E_n^{\hyp}$. 

\item $O_n$, $n < 0$, decomposes into the $I_2$-orbits

\begin{align*}
O_n^{\open} &:= [-n-1, \C^{\times}t^n+\C t^{n+1}+\dots+\C t^{-n-2}] \cong \Gm \times \Aff^{-2n-2} \\
O_n^{\hyp} &:= [-n-2,\C^{\times}t^n +\C t^{n+1}+\dots + \C t^{-n-3}]\sqcup \dots \sqcup [n+1, \C^{\times} t^n]\sqcup [n,0]^{\prime} \cong \Aff^{-2n-2}\;.
\end{align*}

In fact, we have $I_4[-n-1,t^n] \supseteq O_n^{\open}$ and $I_4[n,0]^{\prime} \supseteq O_n^{\hyp}$. 

\end{enumerate}
\end{Prop}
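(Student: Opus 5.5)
The plan is to reduce everything to Proposition \autoref{Prop:I1orbits} and Proposition \autoref{Prop:decompofFl}, together with the inclusions $I_4 \subset I_3 \subset I_2 \subset I_1$.

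\emph{Parts (1) and (2).} These are immediate. Proposition \autoref{Prop:I1orbits}(3) gives $I_3[n,t^{-n}] \supseteq E_n^{\open}$ and $I_2[n,0]\supseteq E_n^{\hyp}$. Since $I_3 \subset I_2$ and both sets are $I_1$-invariant, hence $I_2$-invariant, we get equality: each is a single $I_2$-orbit. The same argument applies to $O_n^{\open}$ and $O_n^{\hyp}$ via Proposition \autoref{Prop:I1orbits}(4). The case $O_0^{\hyp}$ is a point and is excluded by convention.

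\emph{Part (3), setup and $E_n^{\hyp}$.} Let $n<0$. By Proposition \autoref{Prop:decompofFl}(2), the map $p \mapsto \begin{pmatrix}1&0\\tp&1\end{pmatrix}[n,0]$ identifies $\{p\in\C[t] : \nu_\infty(p)\le -2n-1\}$ with $E_n$. Under this identification:
\begin{itemize}
\item $E_n^{\open}$ is the locus $p_0\neq 0$, i.e. the stratum $k=\nu(p)=0$;
\item $E_n^{\hyp}$ is the locus $p_0=0$, i.e. the strata $k\ge 1$ together with $p=0$.
\end{itemize}
This gives the disjoint decomposition and the stated isomorphism types.

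Next I show $I_4[n,0]\supseteq E_n^{\hyp}$. The elements $\begin{pmatrix}1&0\\t^2c&1\end{pmatrix}$ with $c\in\C[t]$ lie in $I_4$. Taking $p=tc$, they realize every $p$ with $p_0=0$.

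\emph{Part (3), $E_n^{\open}$.} I show $I_4$ acts transitively on $E_n^{\open}$ as follows. Start from a point $x=\begin{pmatrix}1&0\\tp&1\end{pmatrix}[n,0]$ with $p_0\ne0$. The torus element $\mathrm{diag}(\alpha,\alpha^{-1})\in I_4$ fixes $[n,0]$, and conjugation gives
\[
\mathrm{diag}(\alpha,\alpha^{-1})\begin{pmatrix}1&0\\tp&1\end{pmatrix}\mathrm{diag}(\alpha^{-1},\alpha)=\begin{pmatrix}1&0\\ \alpha^{-2}tp&1\end{pmatrix}.
\]
So the torus rescales $p$ by $\alpha^{-2}$, and $p_0$ can be made any element of $\C^\times$. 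Left multiplication by $\begin{pmatrix}1&0\\t^2c&1\end{pmatrix}$ adds $tc$ to $p$, so all higher coefficients can be made arbitrary. The point $[-n-1,t^n]^{\prime}$ is the image of a suitable constant $p$. Hence $I_4[-n-1,t^n]^{\prime}\supseteq E_n^{\open}$.

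\emph{Part (3), invariance of $E_n^{\hyp}$ — the main step.} It remains to show $E_n^{\hyp}$ is $I_2$-invariant. Then $E_n^{\open}$, being its complement in the $I$-orbit $E_n$, is $I_2$-invariant too, and the two transitivity statements finish the proof. Since $E_n^{\hyp}=I_4[n,0]$, it suffices to show $I_2[n,0]\subseteq E_n^{\hyp}$.

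Take $g=\begin{pmatrix}a&tb\\t^2c&d\end{pmatrix}\in I_2$. Regard $g$ as an element of $I$ whose lower-left entry is $t\cdot c'$ with $c'=tc\in t\C[[t]]$. Apply the computation from the proof of Proposition \autoref{Prop:decompofFl}(2), i.e. \eqref{eq:eltofIon[n,0]prime} with $c$ replaced by $c'$:
\begin{itemize}
\item if $\nu(c')\le -2n-1$, then $g[n,0]$ lies in the stratum $[-n-1-\nu(c'),\dots]^{\prime}$;
\item if $\nu(c')\ge -2n$, or $c'=0$, then $g[n,0]=[n,0]$.
\end{itemize}
In the first case $\nu(c')\ge1$, so the stratum index is $k\ge1$. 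In both cases $g[n,0]\in E_n^{\hyp}$. The only care needed is to confirm that the computation there uses nothing about the element beyond the valuation of its lower-left entry. That holds, since only $\widetilde d/\widetilde c$ and $\nu(c)$ enter.

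\emph{Part (4).} This is entirely analogous, using Proposition \autoref{Prop:decompofFl}(4) with $\nu_\infty(p)\le -2n-2$, the points $[n,0]^{\prime}$ and $[-n-1,t^n]$, and the same torus and lower-unipotent elements of $I_4$.
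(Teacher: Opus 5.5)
Your proof is correct, but in part (3) you run the complement argument in the opposite direction from the paper. The paper proves $I_2$-invariance of the open piece directly. It applies a general element of $I_2$ to $[-n-1,t^n]^{\prime}$, renormalizes as in Lemma \ref{Lemma:normalform}, and finds the result in $[-n-1,\C^{\times}t^n+\dots+\C t^{-n-1}]^{\prime}$; $E_n^{\hyp}$ is then invariant as the complement. For $E_n^{\open}\subseteq I_4[-n-1,t^n]^{\prime}$ the paper acts with $\begin{pmatrix}\alpha(1+t^2a)&0\\ t^2c&\alpha^{-1}(1+t^2a)^{-1}\end{pmatrix}$, expands a power series to control the coefficients of $t^n,t^{n+1},t^{n+2}$, and finishes with upper unipotents $\begin{pmatrix}1&t^2b\\0&1\end{pmatrix}$. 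You instead stay in the single chart $p\mapsto\begin{pmatrix}1&0\\tp&1\end{pmatrix}[n,0]$ of $E_n$, where the hyperplane is $p_0=0$. In that chart, invariance of $E_n^{\hyp}$ needs no new computation. The identity \eqref{eq:eltofIon[n,0]prime} already shows that an element of $I$ with lower-left entry $tc^{\prime}$ either fixes $[n,0]$ or sends it to the stratum of index $\nu(c^{\prime})$, and membership in $I_2$ forces $\nu(c^{\prime})\geq 1$. Transitivity on the open part likewise reduces to the torus rescaling $p\mapsto\alpha^{-2}p$ and the translations $p\mapsto p+tc$, with no series expansion. Your route is shorter and explains why $I_2$ cuts $E_n$ exactly along $p_0=0$: the extra power of $t$ in the lower-left entry is what forbids the stratum $k=0$. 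The paper's route works directly in the normal-form coordinates in which $E_n^{\open}$ is defined. It also displays the $I_4$-action on those coordinates explicitly, in the same style as the later computations for $E_n^{\open\open}$.
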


\begin{Rem} By the proposition, the list of $I_2$-orbits in $\SL_2((t))/I$ with their distinguished points is

\begin{align*}
& E_0 = [0,0] \\
& E_n^{\open} \ni \begin{cases} [n, t^{-n}] & n > 0 \\ [-n-1,t^n]^{\prime} & n <0\end{cases},\ n  \in \Z\setminus\{0\} \\
& E_n^{\hyp} \ni [n,0],\ n \in \Z\setminus\{0\}\\
& O_n^{\open} \ni \begin{cases} [n,t^{-n}]^{\prime} & n \geq 0 \\ [-n-1,t^n] & n < 0\end{cases},\ n \in \Z\\
& O_n^{\hyp} \ni [n,0]^{\prime},\ n \in \Z\;.
\end{align*}

The point orbits are $E_0= [0,0]$, $O_{-1}^{\hyp}=[-1,0]^{\prime}$, $O_0^{\hyp}=[0,0]^{\prime}$. 
\end{Rem}

\begin{proof}
\noindent

(1) follows from Proposition \autoref{Prop:I1orbits}(3). 

\par

(2) follows from Proposition \autoref{Prop:I1orbits}(4). \par

(3) By \eqref{eq:Lnneg} we have $E_n = E_n^{\open} \sqcup E_n^{\hyp}$. \par
\emph{$E_n^{\open}$ is an $I_2$-orbit and $I_4[-n-1,t^n]^{\prime} \supseteq E_n^{\open}$.} 
First, we show $I_2[-n-1, t^n]^{\prime} \subseteq E_n^{\open}$. We have for $\begin{pmatrix} a & tb \\ t^2c & d\end{pmatrix} \in I_2$

\begin{align}\label{eq:eltofI2on[-n-1,t^n]^primenew}
\begin{pmatrix} a & tb \\ t^2c & d\end{pmatrix} \begin{pmatrix} t^n & t^{-n-1} \\ -t^{n+1} & 0\end{pmatrix}I
= \begin{pmatrix} t^n a-t^{n+2}b & t^{-n-1}a \\ t^{n+2}c-t^{n+1}d & t^{-n+1}c\end{pmatrix} I\;. 
\end{align}

We have

\begin{align*}
\frac{\widetilde{d}}{\widetilde{c}} = \frac{t^{-n+1}c}{t^{n+2}c-t^{n+1}d} = \frac{t^{-2n}c}{tc-d} \in \C[[t]]\;.
\end{align*}

Hence, since $\widetilde{c}=t^{n+1}\widetilde{c}_{(0)}=t^{n+1}(-d+tc)$, \eqref{eq:eltofI2on[-n-1,t^n]^primenew} equals

\begin{align*}
\begin{pmatrix} \frac{t^n a-t^{n+2}b}{d-tc} & t^{-n-1} \\ -t^{n+1} & 0\end{pmatrix}I \in E_n^{\open}\;.
\end{align*}

Second, we show $I_4 [-n-1,t^n]^{\prime} \supseteq E_n^{\open}$. 
For $\alpha \in \C^{\times}$, $a, c \in \C$ we have

\begin{align*}
\begin{pmatrix} \alpha(1+t^2a) & 0 \\ t^2 c & \alpha^{-1}(1+t^2a)^{-1}\end{pmatrix} \in I_4
\end{align*}

and 

\begin{align}\label{eq:someeltofI4on[-n-1,n]primeprime}
\begin{split}
& \begin{pmatrix} \alpha(1+t^2a) & 0 \\ t^2 c & \alpha^{-1}(1+t^2a)^{-1}\end{pmatrix}\begin{pmatrix} t^n & t^{-n-1} \\ -t^{n+1} & 0\end{pmatrix}I = \\
& = \begin{pmatrix} \alpha(1+t^2a)t^n & \alpha(1+t^2a)t^{-n-1} \\ t^{n+2}c-\alpha^{-1}(1+t^2a)^{-1}t^{n+1} & t^{-n+1}c\end{pmatrix}I\;.
\end{split}
\end{align}

We have 

\begin{align*}
\frac{\widetilde{d}}{\widetilde{c}} = \frac{t^{-2n}c}{tc-\alpha^{-1}(1+t^2a)^{-1}} \in \C[[t]] 
\end{align*}

and since 

\begin{align*}
\widetilde{c} = t^{n+1}\widetilde{c}_{(0)} = t^{n+1}(tc-\alpha^{-1}(1+t^2a)^{-1})
\end{align*}

\eqref{eq:someeltofI4on[-n-1,n]primeprime} equals

\begin{align*}
\begin{pmatrix} \frac{\alpha^2(1+t^2 a)^2 t^n}{1-\alpha(1+t^2a)tc}& t^{-n-1} \\ -t^{n+1} & 0\end{pmatrix} I\;.  
\end{align*}

Now

\begin{align*}
& \frac{\alpha^2(1+t^2 a)^2 t^n}{1-\alpha(1+t^2a)tc} \in \\
&\alpha^2(t^n+\alpha c t^{n+1}+(2a+\alpha^2 c^2)t^{n+2})+p_{n+3}t^{n+3}+\dots +p_{-n-1}t^{-n-1}+t^{-n}\C[[t]]
\end{align*}

and hence the coefficients of $t^n$, $t^{n+1}$, $t^{n+2}$ are arbitrary by choice of $\alpha, a, c$. We have $\begin{pmatrix} 1 & t^2b \\ 0 & 1\end{pmatrix} \in  I_4$ for $b \in \C[[t]]$ and

\begin{align*}
\begin{pmatrix} 1 & t^2b \\ 0 & 1\end{pmatrix}[-n-1,p]^{\prime} = [-n-1,p-t^{n+3}b]^{\prime}
\end{align*}

and hence the coefficients $p_{n+3}$, \dots, $p_{-n-1}$ can also be made arbitrary elements of $\C$. 
\par

\emph{$E_n^{\hyp}$ is an $I_2$-orbit
and $I_4[n,0] \supseteq E_n^{\hyp}$.} 
Since $E_n = E_n^{\open}\sqcup E_n^{\hyp}$ and $E_n^{\open}$ are $I_2$-invariant, so is $E_n^{\hyp}$. We have by 
Proposition \autoref{Prop:decompofFl}(2) for $0 \leq k \leq -2n-2$

\begin{align*}
& I_4[n,0] \supseteq \begin{pmatrix} 1 & 0 \\ t^2(\C^{\times}t^k+\C t^{k+1}+\dots + \C t^{-2n-2}) & 1\end{pmatrix} [n,0]  \\
&  = [-n-2-k, \C^{\times}t^n+\C t^{n+1}+\dots + \C t^{-n-2-k}]^{\prime}
\end{align*}

and hence $I_4[n,0] \supseteq E_n^{\hyp}$.  

\par

\emph{(4)} is analogous to (3). 

\end{proof}

\section{$I_3$-Orbits}
Each $I_2$-orbit, recall that some of them are in fact $I_1$-orbits,
is either an $I_3$-orbit or decomposes into two $I_3$-orbits as follows.

\begin{Prop}\label{Prop:I3orbits}\noindent
\begin{enumerate}
\item $E_n^{\open}$, $n \neq 0$, $E_n^{\hyp}$, $n < 0$, is an $I_3$-orbit. 

\item $O_n^{\open}$, $n \in \Z$, $O_n^{\hyp}$, $n < 0$, is an $I_3$-orbit. 

\item $E_n^{\hyp}$, $n > 0$, decomposes into the $I_3$-orbits

\begin{align*}
E_n^{\hyp\open} &:= [n, \C^{\times}t^{-n+1}+\C t^{-n+2}+\dots  + \C t^{n-1}] \cong \Gm \times \Aff^{2n-2} \\
E_n^{\hyp\hyp} &:= [n, \C t^{-n+2}+\dots+\C t^{n-1}] \cong \Aff^{2n-2}\;.
\end{align*}

In fact, we have $I_4 [n,t^{-n+1}] \supseteq E_n^{\hyp\open}$ 
and $I_4 [n,0] \supseteq E_n^{\hyp\hyp}$. 

\item $O_n^{\hyp}$, $n > 0$, decomposes into the $I_3$-orbits

\begin{align*}
O_n^{\hyp\open} &:= [n, \C^{\times}t^{-n+1}+\C t^{-n+2}+\dots + \C t^n]^{\prime} \cong \Gm \times \Aff^{2n-1}\\
O_n^{\hyp\hyp} &:= [n, \C t^{-n+2}+\dots + \C t^n]^{\prime} \cong \Aff^{2n-1}\;.
\end{align*}

In fact, we have $I_4[n,t^{-n+1}]^{\prime} \supseteq O_n^{\hyp\open}$
and $I_4[n,0]^{\prime} \supseteq O_n^{\hyp\hyp}$. 

\end{enumerate}
\end{Prop}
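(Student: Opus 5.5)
The plan is to follow the pattern of the proofs of Propositions \autoref{Prop:I1orbits} and \autoref{Prop:I2orbits}. We use the inclusions $I_4 \subset I_3 \subset I_2$ throughout. For each claimed $I_3$-orbit $O$ we need two facts: $O$ is $I_3$-invariant, and some subgroup of $I_3$ moves a distinguished point of $O$ onto all of $O$.

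\emph{Parts (1) and (2).} These need no new computation. For $n>0$, Proposition \autoref{Prop:I1orbits}(3),(4) already gives $I_3[n,t^{-n}] \supseteq E_n^{\open}$ and $I_3[n,t^{-n}]^{\prime} \supseteq O_n^{\open}$ (the latter also for $n = 0$). For $n<0$, Proposition \autoref{Prop:I2orbits}(3),(4) gives
\[
I_4[-n-1,t^n]^{\prime} \supseteq E_n^{\open},\quad I_4[n,0] \supseteq E_n^{\hyp},\quad I_4[-n-1,t^n] \supseteq O_n^{\open},\quad I_4[n,0]^{\prime} \supseteq O_n^{\hyp}.
\]
Each of these sets is an $I_2$-orbit, hence $I_3$-invariant, and contains the $I_3$-orbit of one of its points. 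It is therefore a single $I_3$-orbit.

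\emph{Part (3).} By \eqref{eq:Lnpos}, $E_n^{\hyp} = E_n^{\hyp\open} \sqcup E_n^{\hyp\hyp}$, according to whether the coefficient of $t^{-n+1}$ vanishes. First I show $I_3[n,t^{-n+1}] \subseteq E_n^{\hyp\open}$. Take $\begin{pmatrix} a & t^2b \\ t^2c & d\end{pmatrix}\in I_3$ and multiply it with $\begin{pmatrix} t^n & t^{-n+1}\\ 0 & t^{-n}\end{pmatrix}$. The bottom row becomes $\widetilde{c}=t^{n+2}c$ and $\widetilde{d} = t^{-n}(d+t^2c)$. Hence $\widetilde{c}/\widetilde{d}\in t\C[[t]]$, and the normal form from Lemma \autoref{Lemma:normalform} is
\[
\Bigl[n,\ \frac{t^{-n+1}a+t^{-n+2}b}{d+t^2c}\Bigr].
\]
Its $t^{-n}$-coefficient is $0$ and its $t^{-n+1}$-coefficient is $a_0/d_0\neq 0$, so the point lies in $E_n^{\hyp\open}$.

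Next I show $I_4[n,t^{-n+1}]\supseteq E_n^{\hyp\open}$. The diagonal elements $\mathrm{diag}(\alpha,\alpha^{-1})\in I_4$ send $[n,t^{-n+1}]$ to $[n,\alpha^2t^{-n+1}]$, which makes the leading coefficient an arbitrary element of $\C^{\times}$. The unipotent elements $\begin{pmatrix}1&t^2b\\0&1\end{pmatrix}\in I_4$ act by $[n,p]\mapsto[n,p+t^{-n+2}b]$ (with Convention \autoref{Conv:truncationinprimeandprimeprime}). These make the coefficients of $t^{-n+2},\dots,t^{n-1}$ arbitrary, so $E_n^{\hyp\open}$ is an $I_3$-orbit. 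Since $E_n^{\hyp}$ is an $I_2$-orbit, hence $I_3$-invariant, its complement $E_n^{\hyp\hyp}$ is $I_3$-invariant too. The same unipotent elements give
\[
I_4[n,0]\supseteq[n,\C t^{-n+2}+\dots+\C t^{n-1}]=E_n^{\hyp\hyp}.
\]

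\emph{Part (4).} This is analogous, with $\begin{pmatrix} p & t^n\\ -t^{-n}&0\end{pmatrix}$ in place of the upper triangular matrix. I expect the only real obstacle to be the normal-form bookkeeping in the inclusion $I_3[n,t^{-n+1}]^{\prime}\subseteq O_n^{\hyp\open}$. Here one must check that, after multiplying by a general element of $I_3$, one is in the case $\widetilde{d}/\widetilde{c}\in\C[[t]]$ of Lemma \autoref{Lemma:normalform}. One must also check that the resulting top-left entry, truncated at $t^{n+1}$, still has zero $t^{-n}$-coefficient and nonzero $t^{-n+1}$-coefficient. Because the off-diagonal entries of $I_3$ are divisible by $t^2$, the correction terms should start at $t^{-n+2}$, as in part (3). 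The transitivity step again uses $\mathrm{diag}(\alpha,\alpha^{-1})$ and $\begin{pmatrix}1&t^2b\\0&1\end{pmatrix}$, the latter acting by $[n,p]^{\prime}\mapsto[n,p+t^{-n+2}b]^{\prime}$.
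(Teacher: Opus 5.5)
Your proposal is correct and follows the paper's proof step for step. Parts (1) and (2) are inherited from Propositions \ref{Prop:I1orbits} and \ref{Prop:I2orbits}; part (3) uses the same normal-form computation for $I_3[n,t^{-n+1}]\subseteq E_n^{\hyp\open}$, then $\mathrm{diag}(\alpha,\alpha^{-1})$ and $\begin{pmatrix}1&t^2b\\0&1\end{pmatrix}$ for the reverse inclusion, with (4) treated as analogous. The slips are cosmetic and do not affect the argument: the bottom-right entry is $t^{-n}(d+t^3c)$, not $t^{-n}(d+t^2c)$; in (4) the unipotent acts by $[n,p]^{\prime}\mapsto[n,p-t^{-n+2}b]^{\prime}$; and in (1)--(2) each set is \emph{contained in} (does not merely contain) the $I_3$-orbit of its distinguished point.
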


\begin{Rem} By the proposition, the list of $I_3$-orbits in $\SL_2((t))/I$ with their distinguished points is

\begin{align*}
& E_0 = [0,0] \\
& E_n^{\open} \ni \begin{cases} [n, t^{-n}] & n > 0 \\ [-n-1,t^n]^{\prime} & n <0\end{cases},\ n  \in \Z\setminus\{0\} \\
& E_n^{\hyp} \ni [n,0],\ n < 0\\
& O_n^{\open} \ni \begin{cases} [n,t^{-n}]^{\prime} & n \geq 0 \\ [-n-1,t^n] & n < 0\end{cases},\ n \in \Z\\
& O_n^{\hyp} \ni [n,0]^{\prime},\ n \leq 0 \\
& E_n^{\hyp\open} \ni [n, t^{-n+1}],\ n > 0 \\
& E_n^{\hyp\hyp} \ni [n,0],\ n > 0 \\
& O_n^{\hyp\open} \ni [n,t^{-n+1}]^{\prime},\ n > 0 \\
& O_n^{\hyp\hyp} \ni [n, 0]^{\prime},\ n > 0\;.
\end{align*}

The point orbits are $E_0= [0,0]$, $O_{-1}^{\hyp}=[-1,0]^{\prime}$, $O_0^{\hyp}=[0,0]^{\prime}$, $E_1^{\hyp\hyp} = [1,0]$. 
\end{Rem}

\begin{Rem}
Since $\dot{s_1} I_3 \dot{s_1}^{-1} = I_3$ it follows that if 
$O$ is an $I_3$-orbit in $\SL_2((t))/I$, then so is $\dot{s_1}O$.
From Remark \autoref{Rem:involutiondots} we find that the involution $\dot{s_1}$ of 
$\SL_2((t))/I$ restricts to isomorphisms

\begin{align*}
& E_0=[0,0] \mapsto O_0^{\hyp} = [0,0]^{\prime}\\
& E_n^{\open} \xrightarrow{\cong} E_n^{\open},\ n > 0\\
& E_n^{\open} \xrightarrow{\cong} O_{-n}^{\hyp\open},\ n < 0 \\
& E_n^{\hyp} \xrightarrow{\cong} O_{-n}^{\hyp\hyp},\ n < 0 \\
& O_n^{\open} \xrightarrow{\cong} O_n^{\open},\ n \geq 0 \\
& O_n^{\open} \xrightarrow{\cong} E^{\hyp\open}_{-n},\ n < 0\\
& O_n^{\hyp} \xrightarrow{\cong} E^{\hyp\hyp}_{-n},\ n < 0\;.
\end{align*}

\end{Rem}

\begin{proof}\noindent

(1) \emph{$E_n^{\open}$, $n > 0$, is an $I_3$-orbit.} 
This follows from Proposition \autoref{Prop:I1orbits}(3).

\par
\emph{$E_n^{\open}$, $E_n^{\hyp}$, $n < 0$, is an $I_3$-orbit.} This follows from Proposition \autoref{Prop:I2orbits}(3).
\par

(2) \emph{$O_n^{\open}$, $n \geq 0$, is an $I_3$-orbit.} This follows from Proposition \autoref{Prop:I1orbits}(4). \par

\emph{$O_n^{\open}$, $O_n^{\hyp}$, $n < 0$, is an $I_3$-orbit.} 
This follows from Proposition \autoref{Prop:I2orbits}(4). \par

(3) \emph{$E_n^{\hyp\open}$ is an $I_3$-orbit and $I_4[n,t^{-n+1}] \supseteq E_n^{\hyp\open}$.} 
First, we show $I_3 [n,t^{-n+1}] \subseteq E_n^{\hyp\open}$. 
We have for $\begin{pmatrix} a & t^2b \\ t^2c & d\end{pmatrix} \in I_3$

\begin{align}\label{eq:eltofI3on[n,t^(-n+1)]prime}
\begin{pmatrix} a & t^2b \\ t^2 c & d\end{pmatrix} \begin{pmatrix} t^n & t^{-n+1} \\ 0 & t^{-n}\end{pmatrix}I = \begin{pmatrix} t^n a & t^{-n+1}a+t^{-n+2}b \\ t^{n+2}c & t^{-n+3}c+t^{-n}d\end{pmatrix}I\;.
\end{align}

We have

\begin{align*}
\frac{\widetilde{c}}{\widetilde{d}} = \frac{t^{2n+2}c}{t^3c+d} \in t\C[[t]]
\end{align*}

and $\widetilde{d}=t^{-n}\widetilde{d}_{(0)}=t^{-n}(d+t^3c)$ and hence \eqref{eq:eltofI3on[n,t^(-n+1)]prime} equals

\begin{align*}
\begin{pmatrix} t^n & \frac{t^{-n+1}a+t^{-n+2}b}{d+t^3c} \\ 0 & t^{-n}\end{pmatrix}I \in E_n^{\hyp\open}\;.
\end{align*}

Second, we show  $I_4[n,t^{-n+1}] \supseteq E_n^{\hyp\open}$. 
We have $\begin{pmatrix} \alpha & 0 \\ 0 & \alpha^{-1}\end{pmatrix} \in I_4$, $\alpha \in \C^{\times}$, and

\begin{align*}
\begin{pmatrix} \alpha & 0 \\ 0 & \alpha^{-1}\end{pmatrix} [n, t^{-n+1}] = [n, \alpha^2 t^{-n+1}]\;.
\end{align*}

Using that $\begin{pmatrix} 1 & t^2b \\ 0 & 1\end{pmatrix} \in I_4$ for $b \in \C[[t]]$ and

\begin{align*}
\begin{pmatrix} 1 & t^2b \\ 0 & 1\end{pmatrix} [n, p] = [n, p+t^{-n+2}b]
\end{align*}

we can make $p_{-n+2}$, \dots, $p_{n-1}$ arbitrary elements of $\C$. 
Thus we have shown $I_4 [n,t^{-n+1}] \supseteq E_n^{\hyp\open}$.
\par

\emph{$E_n^{\hyp\hyp}$ is an $I_3$-orbit and $I_4[n,0] \supseteq E_n^{\hyp\hyp}$.} $E_n^{\hyp\hyp}$ is  $I_3$-invariant since $E_n^{\hyp}$
and $E_n^{\hyp\open}$ are so. Since 

\begin{align*}
I_4 [n, 0] \supseteq \begin{pmatrix} 1 & \C t^2+\dots + \C t^{2n-1} \\
0 & 1\end{pmatrix} [n, 0] = [n, \C t^{-n+2}+\dots + \C t^{n-1}]
\end{align*}

it also follows that $I_4 [n, 0] \supseteq E_n^{\hyp\hyp}$. 

\par

(4) is analogous to (3). 

\end{proof}

\section{$I_4^{\rot}$-Orbits}

We can form the semidirect product $I_4^{\rot} = I_4 \rtimes \Gm^{\rot}$, $\rot$ indicating that we consider
the action by loop rotation of Remark \autoref{Rem:Gmrotactionon[]primeand[]primeprime}. $I_4^{\rot}$ naturally acts on $\SL_2((t))/I$. 
Each $I_3$-orbit, recall that some of them are in fact $I_2$- or $I_1$-orbits,
is either an $I_4^{\rot}$-orbit or decomposes into two $I_4^{\rot}$-orbits as follows. 

\begin{Prop}\label{Prop:I4rotorbits}
\noindent
\begin{enumerate}
\item $E_n^{\open}$, $E_n^{\hyp}$, $n < 0$, is an
$I_4$-orbit. 

\item $O_n^{\open}$, $n \leq 0$, $O_n^{\hyp}$, $n \leq -2$, is an
$I_4$-orbit. 

\item $E^{\hyp\open}_n$, $n > 0$, $E^{\hyp\hyp}_n$, $n \geq 2$, is an
$I_4$-orbit. 

\item $O^{\hyp \open}_n$, $O^{\hyp\hyp}_n$, $n > 0$, is an $I_4$-orbit. 

\item $E^{\open}_n$, $n > 0$, decomposes into the
$I_4^{\rot}$-orbit

\begin{align*}
E^{\open \open}_n &:= [n, \C^{\times}t^{-n}+\C^{\times}t^{-n+1}+\C t^{-n+2}+\dots + \C t^{n-1}]
\cong \Gm^2 \times \Aff^{2n-2}
\end{align*}

and the $I_4$-orbit

\begin{align*}
E^{\open \hyp}_n &:= [n, \C^{\times}t^{-n}+\C t^{-n+2}+\dots + \C t^{n-1}] \cong \Gm \times \Aff^{2n-2}\;.
\end{align*}

\item $O^{\open}_n$, $n > 0$, decomposes into the
$I_4^{\rot}$-orbit

\begin{align*}
O^{\open \open}_n &:= [n, \C^{\times}t^{-n}+\C^{\times}t^{-n+1}+\C t^{-n+2}+\dots + \C t^n]^{\prime} \cong \Gm^2 \times \Aff^{2n-1}
\end{align*}

and the $I_4$-orbit

\begin{align*}
O^{\open \hyp}_n &:= [n, \C^{\times}t^{-n}+\C t^{-n+2}+\dots + \C t^n]^{\prime} \cong \Gm \times \Aff^{2n-1}\;.  
\end{align*}
\end{enumerate}
\end{Prop}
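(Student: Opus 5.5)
The plan is to obtain items (1)–(4) from the ``In fact'' statements already proved, and then to do one genuinely new computation for items (5) and (6). That computation exhibits an $I_4$-invariant function on $E_n^{\open}$ and $O_n^{\open}$ which loop rotation rescales.

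\emph{Items (1)–(4).} These need only a little new work.
For $n<0$, Proposition \autoref{Prop:I2orbits}(3),(4) already gives $I_4[-n-1,t^n]^{\prime}\supseteq E_n^{\open}$, $I_4[n,0]\supseteq E_n^{\hyp}$, $I_4[-n-1,t^n]\supseteq O_n^{\open}$ and $I_4[n,0]^{\prime}\supseteq O_n^{\hyp}$. Since these sets are $I_2$-, hence $I_4$-invariant, they are $I_4$-orbits. The exclusion of $O_{-1}^{\hyp}$ in (2) is harmless, since it is a point.
For $O_0^{\open}=[0,\C^{\times}]^{\prime}$ I will use $\mathrm{diag}(\alpha,\alpha^{-1})\in I_4$, which acts by $p\mapsto \alpha^2 p$ (compare Remark \autoref{Rem:involutiondots}-type computations). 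This is transitive on $\C^{\times}$.
Items (3) and (4) follow directly from Proposition \autoref{Prop:I3orbits}(3),(4), which give $I_4[n,t^{-n+1}]\supseteq E_n^{\hyp\open}$, $I_4[n,0]\supseteq E_n^{\hyp\hyp}$ and the primed analogues. Here $E_1^{\hyp\hyp}$ is a point and is therefore excluded.

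\emph{Items (5) and (6): the invariant.} I take a general element $g=\begin{pmatrix} A & t^2B\\ t^2C & D\end{pmatrix}\in I_4$, with $A\equiv \alpha$ and $D\equiv\alpha^{-1} \bmod t^2$. I apply it to $[n,p]\in E_n^{\open}$ and write $p=t^{-n}u$ with $u\in\C[[t]]^{\times}$. As in the proof of Proposition \autoref{Prop:I1orbits}(3), the bottom row of $g\begin{pmatrix}t^n&p\\0&t^{-n}\end{pmatrix}$ is $(t^{n+2}C,\ t^{-n}(D+t^2Cu))$, so the case $\widetilde c/\widetilde d\in t\C[[t]]$ of Lemma \autoref{Lemma:normalform} applies. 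This gives
\begin{align*}
g[n,t^{-n}u]=\Bigl[n,\ t^{-n}\frac{Au+t^2B}{D+t^2Cu}\Bigr]\;,\qquad \frac{Au+t^2B}{D+t^2Cu}\equiv \alpha^2 u \pmod{t^2}\;.
\end{align*}
Hence the ratio $u_1/u_0=p_{-n+1}/p_{-n}$ is $I_4$-invariant on $E_n^{\open}$. In particular $E_n^{\open\hyp}$ (ratio $0$) and $E_n^{\open\open}$ (ratio $\neq0$) are $I_4$-stable, and each fibre of the ratio is $I_4$-stable.

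\emph{Items (5) and (6): transitivity.} By Remark \autoref{Rem:Gmrotactionon[]primeand[]primeprime}, $\gamma\in\Gm^{\rot}$ sends $u_0\mapsto\gamma^{0}u_0=u_0$ and $u_1\mapsto\gamma u_1$, so it multiplies the ratio by $\gamma$. Thus $E_n^{\open\open}$ is $I_4^{\rot}$-stable.
For transitivity on $E_n^{\open\hyp}$, starting from $[n,t^{-n}]$, I use $\mathrm{diag}(\alpha,\alpha^{-1})$ to reach any $p_{-n}\in\C^{\times}$. I then use $\begin{pmatrix}1&t^2b\\0&1\end{pmatrix}$, which adds $t^{-n+2}b$, to make $p_{-n+2},\dots,p_{n-1}$ arbitrary.
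For $E_n^{\open\open}$, starting from $[n,t^{-n}+t^{-n+1}]$, I first rotate by $\gamma$ to get ratio any value in $\C^{\times}$, then scale with $\alpha$ and add $t^2b$ as before.
The primed case (6) is the same computation with Lemma \autoref{Lemma:normalform} in its second case; the only change is that the range of free coefficients extends to $t^n$.

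The main obstacle I expect is the invariance computation above. Specifically, one must check that the normalization from Lemma \autoref{Lemma:normalform} does not introduce a $t^1$-term in the quotient. This holds because every off-diagonal contribution of $I_4$ carries $t^2$, and the diagonal entries are $\alpha^{\pm1}(1+t^2\cdot)$.
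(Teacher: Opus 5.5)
Your proposal is correct and is essentially the paper's proof. Items (1)–(4) are deduced the same way, from the ``in fact'' inclusions of Propositions~\ref{Prop:I2orbits} and~\ref{Prop:I3orbits} together with invariance of the coarser orbits; for $O_0^{\open}$ you should just add that it is $I_4$-stable because it is an $I_1$-orbit. Items (5)–(6) rest on the same normal-form computation, followed by rotation, $\mathrm{diag}(\alpha,\alpha^{-1})$ and $\begin{pmatrix}1&t^2b\\0&1\end{pmatrix}$. The one variation is that you apply a general element of $I_4$ to a general point $[n,t^{-n}u]$ rather than to the distinguished point $[n,t^{-n}+t^{-n+1}]$. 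This gives the $I_4$-invariant ratio $p_{-n+1}/p_{-n}$ directly, so you do not need the complement argument for $E_n^{\open\hyp}$, and you get for free the finer $I_4$-orbits $E_n^{\open\open\beta}$ that the paper records in the remark after the proposition.
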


\begin{Rem}
In the proposition, all subvarieties are obviously $\Gm^{\rot}$-invariant, see Remark \autoref{Rem:Gmrotactionon[]primeand[]primeprime}. 
We have thus written $I_4$
instead of $I_4^{\rot}$ whenever this is possible, for precision. 
The arguments of the proof of the proposition in fact show that the $I_4^{\rot}$-orbit
$E_n^{\open\open}$, $n > 0$, decomposes 
as $E_n^{\open\open} = \bigsqcup_{\beta \in \C^{\times}} E_n^{\open\open\beta}$, where

\begin{align*}
& E_n^{\open\open\beta} := \\
& \{[n, p_{-n} t^{-n}+p_{-n+1} t^{-n+1}+\C t^{-n+2}+\dots
+\C t^{n-1}] \;\vert\; p_{-n}, p_{-n+1} \in \C^{\times},\; p_{-n} = \beta p_{-n+1}\} \\
& \cong \Gm \times \Aff^{2n-2}
\end{align*}

is an $I_4$-orbit. Similarly we have for $n >0$
$O_n^{\open\open} = \bigsqcup_{\beta \in \C^{\times}} O_n^{\open\open\beta}$,

where 

\begin{align*}
& O_n^{\open\open \beta} := \\
& \{[n, p_{-n} t^{-n}+p_{-n+1} t^{-n+1}+\C t^{-n+2}+\dots
+\C t^n ]^{\prime} \;\vert\; p_{-n}, p_{-n+1} \in \C^{\times},\; p_{-n} = \beta p_{-n+1}\} \\
& \cong \Gm \times \Aff^{2n-1}
\end{align*}

is an $I_4$-orbit. 
\end{Rem}

\begin{Rem} By the proposition, the list of $I_4^{\rot}$-orbits in $\SL_2((t))/I$ with their distinguished points is

\begin{align*}
& E_0 = [0,0] \\
& E_n^{\open} \ni [-n-1, t^n]^{\prime},\ n < 0 \\
& E_n^{\hyp} \ni [n,0],\ n < 0\\
& O_n^{\open} \ni \begin{cases} [0,1]^{\prime} & n=0 \\ [-n-1,t^n] & n < 0 \end{cases},\ n \leq 0\\
& O_n^{\hyp} \ni [n,0]^{\prime},\ n \leq 0 \\
& E_n^{\hyp\open} \ni [n,t^{-n+1}],\ n > 0\\
& E_n^{\hyp\hyp} \ni [n,0],\ n > 0\\
& E_n^{\open\open} \ni [n,t^{-n}+t^{-n+1}],\ n > 0\\
& E_n^{\open\hyp} \ni [n, t^{-n}],\ n > 0\\
& O_n^{\hyp\open} \ni [n, t^{-n+1}]^{\prime},\ n > 0\\
& O_n^{\hyp\hyp} \ni [n,0]^{\prime},\ n > 0\\
& O_n^{\open\open} \ni [n, t^{-n}+t^{-n+1}]^{\prime},\ n > 0 \\
& O_n^{\open\hyp} \ni [n, t^{-n}]^{\prime},\ n > 0\;.
\end{align*}

The point orbits
are $E_0 = [0,0]$, $O_0^{\hyp} = [0,0]^{\prime}$, $O_{-1}^{\hyp}=[-1,0]^{\prime}$, $E_1^{\hyp\hyp} = [1,0]$. 
\end{Rem}

\begin{Rem}
Since $\dot{s_1} I_4 \dot{s_1}^{-1} = I_4$ it follows that if 
$O$ is an $I_4^{\rot}$-orbit in $\SL_2((t))/I$, then so is $\dot{s_1}O$.
From Remark \autoref{Rem:involutiondots} we find that the involution $\dot{s_1}$ of 
$\SL_2((t))/I$ restricts to isomorphisms

\begin{align*}
& E_0=[0,0] \mapsto O_0^{\hyp} = [0,0]^{\prime}\\
& E_n^{\open} \xrightarrow{\cong} O_{-n}^{\hyp \open},\ n < 0 \\
& E_n^{\hyp} \xrightarrow{\cong} O_{-n}^{\hyp\hyp},\ n < 0 \\
& O_0^{\open} \xrightarrow{\cong} O_0^{\open} \\
& O_n^{\open} \xrightarrow{\cong} E_{-n}^{\hyp\open},\ n < 0 \\
& O_n^{\hyp} \xrightarrow{\cong} E_{-n}^{\hyp\hyp},\ n < 0 \\
& E_n^{\open\open} \xrightarrow{\cong} E_n^{\open\open},\ n > 0 \\
& E_n^{\open\hyp} \xrightarrow{\cong} E_n^{\open\hyp},\ n > 0 \\
& O_n^{\open\open} \xrightarrow{\cong} O_n^{\open\open},\ n > 0 \\
& O_n^{\open\hyp} \xrightarrow{\cong} O_n^{\open\hyp},\ n > 0\;.
\end{align*}

\end{Rem}

\begin{proof}
\noindent

(1) follows from Proposition \autoref{Prop:I2orbits}(3). \\

(2)
\emph{$O_0^{\open}$ is an $I_4$-orbit.} 
By Proposition \autoref{Prop:I1orbits}(4) $O_0^{\open}$  is $I_4$-invariant.  
We have $O_0^{\open} = [0, \C^{\times}]^{\prime}$ and hence acting with $\begin{pmatrix} \alpha & 0 \\ 0 & \alpha^{-1}\end{pmatrix} \in I_4$, 
$\alpha \in \C^{\times}$, 
we see that it is an $I_4$-orbit. \par
 \emph{$O_n^{\open}$, $n < 0$, $O_n^{\hyp}$, $n \leq -2$, is an $I_4$-orbit.} 
 This follows from Proposition \autoref{Prop:I2orbits}(4). \par
(3) follows from Proposition \autoref{Prop:I3orbits}(3).\par
(4) follows from Proposition \autoref{Prop:I3orbits}(4). \par
(5) 
 We have $E_n^{\open} = E_n^{\open\open}\sqcup E_n^{\open\hyp}$
 by definition of $E_n^{\open}$. \par
\emph{$E_n^{\open\open}$ is an $I_4^{\rot}$-orbit.} First, we show $I_4 [n, t^{-n}+t^{-n+1}] \subseteq E_n^{\open\open}$. 

We have for $\begin{pmatrix} \alpha(1+t^2a) & t^2b \\ t^2c & \alpha^{-1}(1+t^2d)\end{pmatrix} \in I_4$

\begin{align}\label{eq:generaleltofI4on[n,t^{-n}+t^{-n+1}]^prime}
\begin{split}
& \begin{pmatrix} \alpha(1+t^2a) & t^2b \\ t^2c & \alpha^{-1}(1+t^2d)\end{pmatrix} \begin{pmatrix} t^n & t^{-n}+t^{-n+1} \\ 0 & t^{-n}\end{pmatrix}I = \\
& \begin{pmatrix} \alpha(1+t^2a)t^n  & \alpha(1+t^2a)(t^{-n}+t^{-n+1})+t^{-n+2}b \\ t^{n+2}c & t^2c(t^{-n}+t^{-n+1})+\alpha^{-1}(1+t^2 d)t^{-n}\end{pmatrix} I\;.
\end{split}
\end{align}

We have

\begin{align*}
\frac{\widetilde{c}}{\widetilde{d}} = \frac{t^{2n+2}c}{\alpha^{-1}+t^2(\alpha^{-1}d+c)+t^3c} \in t\C[[t]]
\end{align*}

and since 

\begin{align*}
\widetilde{d} = t^{-n} \widetilde{d}_{(0)} = t^{-n}(\alpha^{-1}+t^2(\alpha^{-1}d+c)+t^3c) 
\end{align*}

\eqref{eq:generaleltofI4on[n,t^{-n}+t^{-n+1}]^prime} equals

\begin{align}\label{eq:alpha^2t^(-n)+alpha^2t^(-n+1)prime}
\begin{pmatrix} t^n & \frac{\alpha(1+t^2a)(t^{-n}+t^{-n+1})+t^{-n+2}b}{\alpha^{-1}+t^2(\alpha^{-1}d+c)+t^3c}\\ 0 & t^{-n}\end{pmatrix}I\;.
\end{align}

We have

\begin{align*}
& \frac{\alpha(1+t^2a)(t^{-n}+t^{-n+1})+t^{-n+2}b}{\alpha^{-1}+t^2(\alpha^{-1}d+c)+t^3c} = \frac{\alpha^2 t^{-n}+\alpha^2 t^{-n+1} + \alpha(\alpha a+b)t^{-n+2}+\alpha^2 a t^{-n+3}}{1+t^2(d+\alpha c)+t^3\alpha c} \\
& \in \alpha^2t^{-n}+\alpha^2 t^{-n+1}+p_{-n+2}t^{-n+2}+\dots + p_{n-1} t^{n-1}+t^n \C[[t]]
\end{align*}

for some $p_{-n+2}, \dots, p_{n-1} \in \C$. Hence we see $I_4[n,t^{-n}+t^{-n+1}] \subseteq E_n^{\open\open}$
and hence $I_4^{\rot}[n,t^{-n}+t^{-n+1}] \subseteq E_n^{\open\open}$
since $E_n^{\open\open}$ is $\Gm^{\rot}$-invariant. 
Second, we show  $I_4^{\rot}[n,t^{-n}+t^{-n+1}] \supseteq E_n^{\open\open}$. 
Using the $\Gm^{\rot}$-action, see Remark \autoref{Rem:Gmrotactionon[]primeand[]primeprime},  on \eqref{eq:alpha^2t^(-n)+alpha^2t^(-n+1)prime} we have for $\gamma \in \C^{\times}$

\begin{align*}
& \gamma \cdot \begin{pmatrix} t^n & \alpha^2 t^{-n}+\alpha^2 t^{-n+1}+p_{-n+2}t^{-n+2}+\dots + p_{n-1}t^{n-1}\\ 0 & t^{-n}\end{pmatrix}I = \\
&  \begin{pmatrix} t^n & \alpha^2 t^{-n}+\gamma \alpha^2 t^{-n+1}+\gamma^2 p_{-n+2}t^{-n+2}
+\dots+\gamma^{2n-1}p_{n-1}t^{n-1} \\ 0 & t^{-n}\end{pmatrix}I\;.
\end{align*}

Now using that $\begin{pmatrix} 1 & t^2b \\ 0 & 1\end{pmatrix} \in I_4$ for $b \in \C[[t]]$ and

\begin{align*}
\begin{pmatrix} 1 & t^2b \\ 0 & 1\end{pmatrix} [n, p] = [n, p+t^{-n+2}b]
\end{align*}

we can make the coefficients of $t^{-n+2}$, \dots, $t^{n-1}$ arbitrary elements of $\C$. 
Thus we have shown $I_4^{\rot} [n,t^{-n}+t^{-n+1}] \supseteq E_n^{\open\open}$. \par

\emph{$E_n^{\open \hyp}$ is an $I_4$-orbit.} Since $E_n^{\open} = E_n^{\open\open}\sqcup E_n^{\open\hyp}$ and $E_n^{\open\open}$ are $I_4$-invariant, so is 
$E_n^{\open\hyp}$. Since 

\begin{align*}
I_4 [n, t^{-n}] \supseteq \begin{pmatrix} 1 & \C t^2+\dots + \C t^{2n-1} \\
0 & 1\end{pmatrix} [n, t^{-n}] = [n, t^{-n}+\C t^{-n+2}+\dots + \C t^{n-1}]
\end{align*}

and for $\alpha \in \C^{\times}$

\begin{align*}
\begin{pmatrix} \alpha & 0 \\ 0 & \alpha^{-1}\end{pmatrix} [n, t^{-n}+\C t^{-n+2}+\dots + \C t^{n-1}] = [n, \alpha^2 t^{-n}+\C t^{-n+2}+\dots + \C t^{n-1}]
\end{align*}

it also follows that $I_4 [n, t^{-n}] \supseteq E_n^{\open\hyp}$. \par

(6) is analogous to (5). 
\end{proof}

\bibliographystyle{alpha}
\bibliography{references2025}

\begin{thebibliography}{Eic20}

\bibitem[BD]{BD}
A.~Beilinson and V.~Drinfeld.
\newblock {\em Quantization of {Hitchin}'s integrable system and {Hecke}
  eigensheaves}.
\newblock Unpublished.

\bibitem[Eic16]{Eic16b}
C.~Eicher.
\newblock Relaxed highest weight modules from $\mathcal{D}$-modules on the
  {Kashiwara} flag scheme.
\newblock \href{https://arxiv.org/abs/1607.06342}{\nolinkurl{arXiv:1607.06342
  [math.RT]}}, 2016.

\bibitem[Eic20]{Eic20}
C.~Eicher.
\newblock Twisted $\mathcal{D}$-module extensions of local systems on a certain
  subvariety isomorphic to $\mathbb{G}_{\text{m}}^2$ of the affine flag variety
  of $\text{SL}_2$.
\newblock \href{https://arxiv.org/abs/2011.03764}{\nolinkurl{arXiv:2011.03764
  [math.AG]}}, 2020.

\end{thebibliography}
\end{document}